\newcommand{\norm}[1]{\lVert#1\rVert}
\newcommand{\abs}[1]{\lvert#1\rvert}
\newdimen\epsfxsize
\newdimen\epsfysize
\newdimen\epsfxsize
\newdimen\epsfysize
\newcommand{\be}{\begin{equation}}
\newcommand{\ee}{\end{equation}}
\newcommand{\bes}{\begin{equation*}}
\newcommand{\ees}{\end{equation*}}
\newcommand{\s}{\sigma}
\renewcommand{\l}{\lambda}
\renewcommand{\H}{\mathbb H}
\newcommand{\R}{\mathbb R}
\newcommand{\e}{\epsilon}
\newtheorem{thm}{Theorem}[section]
\newtheorem{prop}[thm]{Proposition}
\newtheorem{cor}[thm]{Corollary}
\newtheorem{lemma}[thm]{Lemma}
\def\1{{\bf 1}}
\begin{document}

\baselineskip=1.2\baselineskip
\begin{doublespace}

\title{\bf Spacefilling Curves and Phases of the Loewner Equation}
\bigskip
\author{{\bf Joan Lind}~{\bf and}  {\bf Steffen Rohde}
\footnote{Research supported by NSF Grant DMS-0800968.}}

\maketitle

\abstract{
Similar to the well-known phases of SLE, the Loewner differential equation with Lip(1/2) driving terms is known to have a phase transition at norm 4, when traces change from simple to non-simple curves. We establish the deterministic analog of the second phase transition of SLE, where traces change to space-filling curves: There is a constant $C>4$ such that a Loewner driving term whose trace is space filling has Lip(1/2) norm at least $C.$  We also provide a geometric criterion for traces to be driven by  Lip(1/2)  functions, and show that for instance the Hilbert space filling curve and the Sierpinski gasket fall into this class.
}

\tableofcontents

\bigskip

\section{Introduction and Results}\label{intro}
The Schramm-Loewner Evolution $SLE_{\kappa}$ is the random process of planar curves generated by the
Loewner equation driven by $\lambda(t) = \sqrt{\kappa} B_t$, where $B_t$ is a standard one-dimensional Brownian motion.
(See Section \ref{background} for definitions and references.) 
It is well-known that SLE exhibits two phase transitions,
namely at $\kappa=4$ and at $\kappa=8$. 
For $\kappa\leq4$, the traces are simple curves, whereas
for $\kappa>4$ the traces have self-touchings. 
For $\kappa<8,$ the traces have empty interior, whereas for $\kappa \geq 8,$ the traces are spacefilling.
(Note that all statements about SLE hold almost surely.)

\noindent

There is a close analogy between the behaviour of $SLE_{\kappa}$ traces and the hulls of the deterministic
Loewner equation driven by functions $\lambda\in$ Lip$(1/2)$, where the Lip$(1/2)$ norm $\norm{\l}_{1/2}$ plays the role
of $\kappa.$ It is known that the first phase transition of SLE has a deterministic counterpart: If 
$\norm{\l}_{1/2}<4,$ then the trace is a simple curve, whereas there are functions of norm $4$ that generate non-simple curves. In this paper, we prove the existence of a second phase transition for the deterministic Loewner equation:

\begin{thm} \label{2ndphase}
Suppose $\l$ is a Lip$(1/2)$ driving function that generates a curve with non-empty interior.  Then $\norm{\l}_{1/2} \geq 4.0001.$ 
\end{thm}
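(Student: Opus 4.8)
The plan is to argue by contradiction and extract, from a trace with non-empty interior, a local configuration that a quantitative sharpening of the simple-curve criterion forbids once $\norm{\l}_{1/2}$ is only slightly above $4$.

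\emph{Step 1 (set-up and localization).} Assume $\norm{\l}_{1/2}<4.0001$ and that $\g$ has non-empty interior. Since a simple arc has empty interior (invariance of domain), $\g$ is not a simple curve. Partition $\H$ into $\g$, the ``pockets'' (the bounded components of $\H\setminus\g$), and the never-swallowed set $\H\setminus\bigcup_t K_t$. A disk $B=B(z_0,r)\subset\g$ then meets no pocket and is eventually covered by the hulls, so if $T$ is the first time with $B\subset K_T$, the last part of $B$ must leave the unbounded complementary component by being \emph{swept over} by the tip $\g(t)=g_t^{-1}(\l(t))$, not by being pinched off (pinching would open a pocket inside $B$). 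Replacing $B$ by a sub-disk and invoking the scaling and translation invariances of the Loewner equation — which preserve $\norm{\cdot}_{1/2}$ — I would normalize so that the sweeping of a disk of radius $\geq\tfrac12$ near the origin occurs on a unit time scale, with $\l(0)=0$ and $\hcap(K_T)$ of order one.

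\emph{Step 2 (reduction to a model estimate).} I claim this reduces to a deterministic extremal statement: there is a universal $\delta>0$ such that any Lip$(1/2)$ driving term that, on a unit time interval, sweeps its tip across a disk of radius $\geq\tfrac12$ — so that the hull absorbs that disk \emph{without} pinching off a pocket — has norm at least $4+\delta$; one then fixes $\delta\ge0.0001$. Heuristically a gain over $4$ should be available because, in the $\H$-picture, just before time $T$ the part of $B$ still in the unbounded component is a region of definite diameter attached to $\infty$ through a thin channel, which under $g_t$ becomes a short interval around $\l(t)$ (short because it closes in unit time). For the tip to sweep this channel rather than the trace touching itself across it, $\l$ must immediately afterward cross $\l(t)$ and drag the tip through the image of the channel. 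At norm exactly $4$ the borderline explicit solutions — the Kager--Nienhuis--Kadanoff curves driven by $c\sqrt{t_0-t}$ and their splices — meet such channels only tangentially, through cusps of zero opening angle, leaving no room for the tip to pass; a strictly larger norm is needed to open a passable angle. The model estimate has to make this quantitative.

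\emph{Step 3 (the model estimate and the main obstacle).} This is the analytic core. I would study the Loewner ODE $\dot Z_t = 2/Z_t - \dot\l(t)$ for $Z_t = g_t(z)-\l(t)$, with $z$ ranging over the two lips of the channel, tracking $\arg Z_t$ and $\abs{Z_t}$ as in the proofs that $\norm{\l}_{1/2}<4$ forces a simple trace, but now extracting a margin: a lower bound on the oscillation of $\l$ in terms of how thin \emph{and} how passable the channel is, which contradicts $\norm{\l}_{1/2}<4.0001$ once $\delta$ is calibrated. The extremal driving term should be assembled from $\pm c\sqrt{\,\cdot\,}$ pieces, so Step 2's inequality would reduce to a concrete finite-dimensional optimization whose optimum strictly exceeds $4$; because this comparison is robust rather than tight, it yields an explicit but deliberately non-sharp constant — hence the statement $4.0001$ rather than the (presumably larger) true threshold. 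I expect the difficulty to be twofold: controlling the non-differentiable driving term uniformly in the angle/modulus estimate over all admissible splicings (so $\dot\l$ must be handled by comparison with extremal Lip$(1/2)$ functions, not pointwise), and ensuring the soft topological input of Step 1 is robust enough that a single rescaled ``generation'' of sweeping already forces the norm up, rather than only an infinite cascade of them.
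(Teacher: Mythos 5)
Your Step 1 is sound in outline, and your guiding instinct --- that a space-filling configuration must force a quantitative excess over norm $4$ --- is correct. But the proposal has a genuine gap: all of the quantitative content is deferred to the ``model estimate'' of Steps 2--3, which you describe only heuristically (cusps of zero opening angle, passable channels) and never prove. In particular, you offer no mechanism that actually produces the margin $\delta>0$. The known arguments of the type you invoke (tracking $\arg Z_t$ and $\abs{Z_t}$ along the two lips) show that norm $<4$ yields a quasislit, i.e.\ they give a \emph{local} lower bound of $4$ at a single self-intersection; and the driving term $4\sqrt{1-t}$, which touches back on $\R$, shows that a single such event is compatible with norm exactly $4$. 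So the hope that ``a single rescaled generation of sweeping already forces the norm up'' is exactly the point in doubt, and as written Step 3 is a restatement of the theorem rather than a proof of it.

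The paper supplies the missing mechanism via a different geometric object. Instead of channels swept by the tip, it uses captured \emph{real} points: if $\gamma(t_0)$ is an interior point of the trace, then after applying $g_{t_0}$ and rescaling, an entire interval $I\subset\R$ lies in the hull, so there are uncountably many capture times. Lemmas \ref{interval}--\ref{tail2} analyze the time-changed flow $x_s=g_t(x)/\sqrt{1-t}$ of a captured point and show that for any $M<4$, every window $[(1-e^{-s})T,\,(1-e^{-(s+\Delta)})T]$ with $s\geq S_0$ preceding a capture time $T$ contains a $t$ with $\abs{\l(T)-\l(t)}\geq M\sqrt{T-t}$ --- a local bound of $M$, recurring at all scales, but still only $M<4$. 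The excess over $4$ comes from \emph{superposing two} such lower bounds at capture times $T_1<T_2$ chosen so close together (possible precisely because the capture times are uncountable) that the two forced oscillations cannot coexist under $\norm{\l}_{1/2}\leq 4+2\e$: inequality \eqref{inequality} with $M=3.5$ and $\e=0.00005$ then gives the contradiction. To salvage your approach you would need an analogous two-event (or many-event) superposition inside the sweeping picture, together with an actual quantitative estimate replacing the heuristics of Step 3.
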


The constant 4.0001 that we obtain is certainly far from optimal.
The proof of Theorem \ref{2ndphase} relies on a careful analysis of the behaviour of $\lambda$ at times $t$ when
the trace hits the real line. At each of these times, the local Lip$(1/2)$ norm is at least 4. Space filling curves
have uncountably many such times, which we prove will result in a norm bounded away from 4. 
If we give up the requirement of the trace being space filling and only require the trace to be dense in the upper half plane, Theorem \ref{2ndphase} is no longer true:

\begin{prop} \label{counterexpl} 
For every sequence $z_1,z_2,z_3,...$ of points in $\H$,
there is a trace $\gamma$ that visits these points (in this order) and has Lip$(1/2)$ norm at most $4.$
\end{prop}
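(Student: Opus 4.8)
\noindent\emph{Strategy.} We build $\gamma$ and its driving function $\l$ inductively, one point at a time, using that composition of Loewner maps corresponds to concatenation of driving functions. Suppose that after stage $n-1$ we have a driving function on $[0,t_{n-1}]$ whose hull $K_{t_{n-1}}$ is a simple arc through $z_1,\dots,z_{n-1}$, with Loewner maps $g_t$ and $\l(t_{n-1})\in\R$. A simple arc with one endpoint on $\R$ does not separate $\H$, so (if $z_n$ is not already on the arc) $z_n$ lies in the unbounded complementary component and $w_n:=g_{t_{n-1}}(z_n)\in\H$ is well defined. It then suffices to append a ``local'' driving function $\mu_n$, starting at $\l(t_{n-1})$, whose trace passes through $w_n$; the concatenation produces a curve through $z_n$. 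Two further liberties will be crucial: by Loewner scaling we may run each stage as slowly as we wish, and before each stage we may insert an arbitrarily long interval of constant driving (a vertical slit, local norm $0$).

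\noindent\emph{One-point reachability.} Given $x_0\in\R$ and $w\in\H$, we reach $w$ from $x_0$ by a finite concatenation of explicitly solvable pieces of Lip$(1/2)$ norm strictly below $4$: straight slits from the current (re-normalized) tip at angle $\alpha$ to $\R$, with driving term $c(\alpha)\sqrt{t}$ (up to the additive constant given by the current tip position), where $c(\pi/2)=0$ and $|c(\alpha)|<4$ on an open interval of angles about $\pi/2$; and arcs with driving term $a+b\sqrt{\t-t}$, $|b|<4$, which are simple arcs that may be placed by translation and scaling so as to swing down toward any prescribed point of $\R$ and to within any prescribed positive distance of it. With these we first maneuver so that, in the re-normalized half-plane, the image of $w$ sits above the tip at an angle to $\R$ bounded away from $0$ and $\pi$, then reach it by a single straight slit, then if desired bring the tip back near $\R$. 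The number of pieces and the total oscillation of $\mu_n$ depend only on $w_n$ and $\l(t_{n-1})$. The price of a ``low'' target $w$ (one at small angle to $\R$) is that some arc must dip that close to $\R$, and its local norm then approaches, while staying below, $4$. One must also choose the routing so as never even to \emph{nearly} separate an unvisited $z_m$, $m>n$, from $\infty$ (which would later force a thin channel of large norm); this is arranged by growing $\gamma$ along an essentially monotone ``spine'' with the excursions to the points $z_n$ hanging off it.

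\noindent\emph{The main obstacle: bounding the global norm.} It remains to verify $\norm{\l}_{1/2}\le4$ for the infinite concatenation. Each piece individually has norm $<4$; the work is in bounding $|\l(t)-\l(s)|/\sqrt{t-s}$ for $s<t$ in different pieces. Here the two liberties are spent: inserting a constant stretch of capacity-length $L$ between pieces makes every ratio straddling it at most $\mathrm{osc}_{[0,s]}\l / \sqrt{L}$, which is made as small as needed by choosing the stretch lengths $L=L_n$ to grow quickly in $n$. The delicate interactions are those \emph{within} a stage, between a near-$\R$ dip whose local norm is close to $4$ and a neighboring piece: one must coordinate the dip depths (i.e.\ the constants $b$), the number of pieces per stage, and the stretch lengths so that the $n$th stage is allotted local norm at most $4-\eps_n$ with $\eps_n\downarrow0$ chosen slowly enough that no combination of a near-$4$ local contribution and a small cross contribution ever exceeds $4$. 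Carrying out this bookkeeping consistently — so that the \emph{countably} many controlled near-approaches to $\R$ leave the global norm at $\le4$, in contrast to the \emph{uncountably} many genuine hits that by Theorem~\ref{2ndphase} force norm at least $4.0001$ — is the heart of the argument.
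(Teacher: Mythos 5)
Your architecture coincides with the paper's: reach the points one at a time, concatenating driving terms and inserting a long constant stretch before each new piece. But both load-bearing steps are left as assertions, and each conceals a real difficulty.

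First, one-point reachability. You propose to reach $w$ by several pieces of norm strictly below $4$, first ``maneuvering'' with arcs $a+b\sqrt{\tau-t}$ until the renormalized image of $w$ sits at an angle bounded away from $0$ and $\pi$ above the current tip, then shooting a straight slit. That maneuvering step \emph{is} the nontrivial claim: you must show that along such an arc the image $g_t(w)$, viewed from $\l(t)$, actually attains a good angle, and you give no argument. The paper needs none of this: a single translated, scaled and reflected copy of the driving term $4\sqrt{1-t}$ suffices, because its trace runs from $4$ to $2$ and meets every ray $\{4+re^{i\theta}\}$, $\pi/2<\theta<\pi$, exactly once (see \cite{KNK}, \cite{LMR}); together with the vertical slit for $\operatorname{Re}(z)=x$, this reaches every point of $\H$ from any $x\in\R$ with one explicit piece of norm exactly $4$. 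Relatedly, your worry about ``nearly separating'' unvisited points and thin channels of large norm is unfounded: after applying $g_{T_n+\tau_n}$, the target is just some point of $\H$, and the reachability lemma is uniform over $\H$; only the \emph{duration} of the new piece depends on where that point sits, and that is absorbed into the choice of $\tau_n$. The one genuine routing issue --- that a later $z_k$ may already lie on the curve, spoiling the prescribed order --- is what the paper's remark handles, by perturbing within an uncountable family of disjoint nearby curves.

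Second, the global norm. Your straddling estimate ``$\operatorname{osc}_{[0,s]}\l/\sqrt{L}$'' omits the oscillation of the piece \emph{after} the constant stretch, and that omission hides the actual circularity: waiting for time $\tau_n$ moves the target to $w_n=g_{T_n+\tau_n}(z_{n+1})$, whose distance from $\l(T_n)$ grows like $\sqrt{\tau_n}$, so the duration $\sigma_n$ of the next piece (hence its oscillation $\asymp\sqrt{\sigma_n}$) grows with $\tau_n$. One must check that waiting still wins. The paper does: the cross-terms are controlled exactly when $\sqrt{T_n}+\sqrt{\sigma_n}\le\sqrt{T_n+\tau_n+\sigma_n}$, i.e.\ $4T_n\sigma_n\le\tau_n^2$, and since $\sigma_n\le C(4\tau_n+\mathrm{const})$ is linear in $\tau_n$ while the right-hand side is quadratic, a large enough $\tau_n$ works --- even though every piece has norm exactly $4$, so no budget $4-\eps_n$ is needed. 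As written, your proposal names this bookkeeping ``the heart of the argument'' and stops there; it must actually be carried out.
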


\noindent

It is not obvious that there are space-filling curves generated by Lip$(1/2)$ driving terms. However, we provide a
rather general criterion for hulls to be driven by Lip$(1/2)$ functions and obtain a large class of examples of such
hulls (including those shown in Figure \ref{corfig}, the classical van Koch curve,  the half-Sierpinski gasket, and the Hilbert space-filling curve):

\begin{thm}\label{qdisc}

Let $\{K_t\}$ be a family of hulls generated by the driving term $\lambda(t)$ for $t \in [0,T]$.    Suppose that  there is some $C_0 > 0$ and some $k < \infty$ so that for each $s \in [0,T)$ there exists a $k$-quasi-disc $D_s \subset \mathbb{H}$ with the following three properties:
\begin{align*}
(1&) \text{ }K_s \subset \H\setminus D_s\\
(2&) \text{ }K_T \setminus K_s \subset \overline{D_s} \\
(3&) \operatorname{ diam}(K_t \setminus K_s) \leq C_0 \max \{\operatorname{dist} (z, \partial D_s)\text{ } | \text{ } z \in K_t \setminus K_s \} \text{ for all } t \in (s,T].
\end{align*}
Then $\lambda$ is in Lip$(1/2)$ and $ \norm{\lambda}_{1/2} \leq C(k,C_0)$.
\end{thm}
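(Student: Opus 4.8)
The plan is to prove the quantitative bound $|\lambda(t)-\lambda(s)|\le C(k,C_0)\sqrt{t-s}$ for all $0\le s<t\le T$; membership in Lip$(1/2)$ with the asserted norm is immediate from this. Write $g_s:\H\setminus K_s\to\H$ for the normalized uniformizing map, set $A_{s,t}:=g_s(K_t\setminus K_s)$, and recall the standard facts that $\hcap(A_{s,t})=2(t-s)$, that $A_{s,t}$ is a connected hull with $\lambda(s)\in\overline{A_{s,t}}\cap\R$, and that $\lambda(t)=\varphi_{s,t}(\lambda(s))$, where $\varphi_{s,t}=g_t\circ g_s^{-1}$ uniformizes $\H\setminus A_{s,t}$. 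Since $A_{s,t}\subset\overline{B(\lambda(s),\diam A_{s,t})}$ with $\lambda(s)\in\R$, the elementary estimate $|\varphi_{s,t}(z)-z|\le 3\,\diam A_{s,t}$ (valid at the boundary point $\lambda(s)$ by continuity) reduces everything to
\[
\diam(A_{s,t})^{2}\le C(k,C_0)\,\hcap(A_{s,t}),
\]
i.e.\ to showing that $A_{s,t}$ is a \emph{round} hull, with constant depending only on $k$ and $C_0$.

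Fix $s<t$ and pick $z^{\ast}\in K_t\setminus K_s$ realizing $\rho:=\max\{\dist(z,\partial D_s):z\in K_t\setminus K_s\}$, so that $\diam(K_t\setminus K_s)\le C_0\rho$ by hypothesis (3) and $B(z^{\ast},\rho)\subset D_s$. The conceptually decisive point is that \emph{because $D_s\subset\H$}, the disc $B(z^{\ast},\rho)$ lies in $\H$, so $\rho\le\Im z^{\ast}$: hence the increment $K_t\setminus K_s$ is contained in $\overline{B(z^{\ast},C_0\rho)}$, reaches height at least $\rho\ge\diam(K_t\setminus K_s)/C_0$, and on $B(z^{\ast},\rho)$ stays at distance $\ge\rho$ from $K_s\cup\R$. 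Thus hypotheses (1)--(3) together with $D_s\subset\H$ force $K_t\setminus K_s$ to be a non-degenerate, essentially round hull increment well inside $\H$ --- precisely what excludes the flat, near-$\R$ increments (long horizontal slits and the like) that would make the Lip$(1/2)$ norm blow up.

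It remains to transfer this roundness through $g_s$, which is where the $k$-quasidisc hypothesis is used. Applying Koebe on $B(z^{\ast},\rho)\subset\H\setminus K_s$ gives $B\big(g_s(z^{\ast}),\tfrac14|g_s'(z^{\ast})|\rho\big)\subset g_s(D_s)\subset\H$, so $\Im g_s(z^{\ast})\ge\tfrac14|g_s'(z^{\ast})|\rho$ and $g_s(z^{\ast})$ lies at depth $\gtrsim|g_s'(z^{\ast})|\rho$ in $g_s(D_s)$. Since $A_{s,t}$ is a connected hull meeting $\R$ and rising to height $\ge\Im g_s(z^{\ast})$, a Beurling-type extremal-length estimate gives $\hcap(A_{s,t})\gtrsim\big(\Im g_s(z^{\ast})\big)^{2}\gtrsim|g_s'(z^{\ast})|^{2}\rho^{2}$. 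On the other hand $A_{s,t}$ is the image, under the conformal map $g_s|_{D_s}$ of the $k$-quasidisc $D_s$, of a set contained in $\overline{D_s}\cap\overline{B(z^{\ast},C_0\rho)}$, with $z^{\ast}$ at depth $\rho$; conformal distortion estimates in quasidiscs (Gehring--Hayman / John-domain type bounds, which control Euclidean distortion by the derivative at an interior ``deep'' point and fail for general simply connected domains) yield $\diam(A_{s,t})\le C(k,C_0)\,|g_s'(z^{\ast})|\rho$. Combining, $\diam(A_{s,t})^{2}\le C(k,C_0)|g_s'(z^{\ast})|^{2}\rho^{2}\le C'(k,C_0)\,\hcap(A_{s,t})$, as required.

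I expect the main obstacle to be this last distortion estimate: one must show $g_s$ does not expand $K_t\setminus K_s$ by much more than $|g_s'(z^{\ast})|$, although points of $K_t\setminus K_s$ near $\partial D_s$ may lie close to $K_s\cup\R$ where $g_s'$ is a priori unbounded. The resolution combines three facts --- near $\R$ the map $g_s$ continues by Schwarz reflection, so $|g_s'|$ there is comparable to nearby interior values; near a bad point of $K_s$ any blow-up of $|g_s'|$ is confined to so small a neighborhood that its contribution to $\diam(A_{s,t})$ is negligible; and the bounded-turning/John geometry of the $k$-quasidisc $D_s$ lets one join $K_t\setminus K_s\subset\overline{D_s}\cap\overline{B(z^{\ast},C_0\rho)}$ through $z^{\ast}$ by curves along which Koebe distortion is summable. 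A minor technical point --- that $\overline{D_s}$ may touch $K_s$, so $g_s$ need not be conformal past all of $\partial D_s$ --- is handled by working intrinsically with $g_s|_{D_s}$ via prime ends, or by exhausting $D_s$ by slightly smaller quasidiscs, using that $K_t\setminus K_s$ attaches to $K_s$ only at the prime end carrying $\lambda(s)$.
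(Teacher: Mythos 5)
Your overall architecture is sound and, up to cosmetic differences, is the paper's: both reduce the Lip$(1/2)$ bound to showing that the mapped increment $A_{s,t}=g_s(K_t\setminus K_s)$ is ``round'' (the paper phrases this as $\diam(A_{s,t})\le C\max\{\Im z: z\in A_{s,t}\}$ together with the elementary bounds $\max\Im\le 2\sqrt{t-s}$ and $|\lambda(t)-\lambda(s)|\le 4\diam(A_{s,t})$; your $\diam^2\lesssim\hcap$ is equivalent). Your choice of the deep point $z^{\ast}$ with $B(z^{\ast},\rho)\subset D_s$ and the Koebe step $\Im g_s(z^{\ast})\ge\tfrac14|g_s'(z^{\ast})|\rho$ are also fine.

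The genuine gap is exactly the step you flag as the ``main obstacle'': the estimate $\diam(A_{s,t})\le C(k,C_0)\,|g_s'(z^{\ast})|\rho$. As written this is an assertion, not a proof, and the route you sketch does not obviously close it. In particular, the claim that ``near a bad point of $K_s$ any blow-up of $|g_s'|$ is confined to so small a neighborhood that its contribution to $\diam(A_{s,t})$ is negligible'' is unjustified: points of $K_t\setminus K_s$ can accumulate on $\partial D_s\cap\partial K_s$, where $|g_s'|$ is genuinely unbounded, and a Gehring--Hayman/John argument would require setting up quasihyperbolic geodesics and summing distortion along them --- real work that is not done. The paper closes this step in one move that you do not mention: since $D_s$ is a $k$-quasidisc and $g_s$ is conformal on it (by hypothesis (1)), $g_s|_{D_s}$ extends to a quasiconformal map of $\hat{\mathbb C}$ with constant depending only on $k$, hence is $\eta$-quasisymmetric on $\overline{D_s}$ with $\eta$ depending only on $k$. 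Then for $z\in K_t\setminus K_s$ and any $w\in\partial D_s$, hypothesis (3) gives $|z-z^{\ast}|\le C_0\rho\le C_0|w-z^{\ast}|$, so quasisymmetry yields $|g_s(z)-g_s(z^{\ast})|\le C(k,C_0)|g_s(w)-g_s(z^{\ast})|$; minimizing over $w$ and using $g_s(D_s)\subset\H$ gives $\diam(A_{s,t})\le C(k,C_0)\,\dist\bigl(g_s(z^{\ast}),\partial g_s(D_s)\bigr)\le C(k,C_0)\,\Im g_s(z^{\ast})$, which is precisely your missing inequality (and makes your Beurling-type lower bound on $\hcap$ unnecessary, since $\Im g_s(z^{\ast})\le 2\sqrt{t-s}$). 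So: correct skeleton, but the one place where the quasidisc hypothesis must actually be spent is left open, and the standard tool for it is the quasiconformal extension theorem rather than reflection/John-domain arguments.
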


See Figure \ref{quasidisc} for an illustration of the required quasi-disc $D_s$ 
(and recall
that a quasi-disc is the image of a disc under a quasiconformal mapping of the plane.)
Intuitively, condition (3) means that the hulls grow ``transversally'' rather than ``tangentially.''
An easy consequence of Theorem \ref{qdisc} is

\begin{cor}\label{expls} The van Koch curve, the half-Sierpinski gasket, and the Hilbert space filling curve 
are all generated by Lip$(1/2)$ driving terms. There is a Lip$(1/2)$ driving term whose trace is a 
simple curve $\gamma$ with positive area. In particular, this $\gamma$ is not conformally removable, 
and therefore not uniquely determined by its conformal welding.

\end{cor}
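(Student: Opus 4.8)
The plan is to derive everything from Theorem~\ref{qdisc}, exploiting the (approximate) self-similarity of each curve to produce quasi-discs $D_s$ whose constants are independent of $s$, and then to obtain the positive-area example by perturbing the van Koch construction in a way that preserves those uniform constants. Each of the van Koch curve, the half-Sierpinski gasket, and the Hilbert curve is the attractor of a finite iterated function system, and in each case one orients and parametrizes it by a Loewner trace $\g\colon[0,T]\to\overline\H$ with $\H\setminus K_t$ connected at every stage. The relevant structure is: there are finitely many cell shapes at each generation, and whenever $s$ is the time at which $\g$ finishes a level-$n$ cell, the remaining arc $\g[s,T]$ is, up to a similarity $\psi_s$ of bounded eccentricity (a scaling by roughly the $n$-th contraction ratio composed with a rotation and a translation), a bounded union of copies of $\g[0,T]$, meeting $\g[0,s]$ only at the point $\g(s)$. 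Fix once and for all a bounded $k_0$-quasi-disc $D\subset\H$ containing $\g(0,T]$ whose closure meets $\R$ exactly at $\l(0)=\g(0)$ and for which $\diam \g[0,t]\le C_0'\max\{\dist(z,\partial D):z\in\g[0,t]\}$ for all $t$; this last inequality is the transversality input and is checked directly from the geometry of each IFS, the point being that consecutive cells meet at angles bounded away from $0$ and $\pi$, so the new cell leaves $\partial D$ at a definite transversal rate. For a cell time $s$ put $D_s:=\psi_s(D)$: since $\psi_s$ is a similarity, $D_s$ is a $k(k_0)$-quasi-disc independent of $s$, and conditions (1)--(3) of Theorem~\ref{qdisc} for $D_s$ are just the $\psi_s$-images of the corresponding properties of $D$. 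A general $s\in[0,T]$ lies in the interior of some cell, and enlarging $D_s$ to the quasi-disc associated with the largest cell started after $\g(s)$ handles these times at the cost of a bounded factor. Theorem~\ref{qdisc} then yields $\l\in$ Lip$(1/2)$ in all three cases.

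For the simple curve of positive area, run the von Koch replacement scheme but at generation $n$ replace each edge by a shallow bump of opening angle $\theta_n$ with $\theta_n\downarrow 0$ and $\sum_n\theta_n^2<\infty$; an Osgood-type estimate shows the limiting Jordan arc $\g$ has positive two-dimensional measure. Because every $\theta_n$ is small and, at each fixed generation, bounded away from $0$ and $\pi$, the cells at all generations are uniformly fat quasi-discs and consecutive cells stay uniformly transversal, so the construction of $D_s$ above applies with $(k,C_0)$ independent of the generation; hence this $\g$ is also a Loewner trace with $\l\in$ Lip$(1/2)$.

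For the final assertions: since $\g$ has positive area it carries a nonzero Beltrami coefficient supported on $\g$, and solving the Beltrami equation produces a quasiconformal homeomorphism $\Phi$ of $\wh\C$ that is conformal off $\g$ but not M\"obius; by definition $\g$ is therefore not conformally removable. Choosing the coefficient so that moreover $\Phi(\g)\ne\g$, and noting that post-composing the Riemann maps of the two complementary domains of $\g$ with the conformal restriction of $\Phi$ does not change the welding homeomorphism, we get two distinct curves $\g$ and $\Phi(\g)$ with the same conformal welding, so $\g$ is not determined by its welding.

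The main obstacle is condition (3) of Theorem~\ref{qdisc}. Self-similarity makes (1) and (2) essentially automatic, but (3) demands genuine, scale-uniform control of the relative position of successive cells of each IFS, and for the positive-area curve a quantitative argument that shrinking the bump angles $\theta_n$ never creates tangential contact between a cell and its predecessors; once this is secured, the remainder is bookkeeping together with the standard facts on quasiconformal maps, removability, and conformal welding used above.
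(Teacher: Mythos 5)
There are two genuine gaps. The more serious one is the positive-area construction. A von Koch-type replacement scheme in which each generation-$n$ edge is replaced by a \emph{shallow} bump of angle $\theta_n\downarrow 0$ with $\sum_n\theta_n^2<\infty$ produces a curve of \emph{zero} area --- in fact a rectifiable (chord-arc) curve: the generation-$n$ polygon has $4^n$ edges of length $\prod_{k\le n}\bigl(2(1+\cos\theta_k)\bigr)^{-1}$, so the limit arc is covered by $4^n$ discs of that radius and its area is at most a constant times $\prod_{k\le n}(1+\cos\theta_k)^{-2}\approx 4^{-n}\to 0$. To get positive area from a bump construction you would need the contraction ratio to approach $1/2$, i.e.\ $\theta_n\uparrow\pi/2$ fast enough, and then the uniformity of the quasi-disc and transversality constants that your argument relies on is exactly what breaks down. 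The constructions that actually work (Osgood's, and the one in the paper) are of a different nature: one perturbs a \emph{space-filling} scheme by leaving inside the generation-$k$ cells a union $S_k$ of subsquares with $\mathrm{area}(S_k)=(1-\epsilon_k)\,\mathrm{area}(S_{k-1})$ and $\sum\epsilon_k<\infty$; the curve then contains the fat Cantor set $\bigcap_k S_k$ of area $\prod_k(1-\epsilon_k)>0$ while remaining simple. Your final paragraph (Beltrami coefficient supported on a positive-area curve, conformality of $\Phi$ off $\gamma$, invariance of the welding) is the standard argument and matches the paper, but it has nothing to apply to until the curve itself is fixed.

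The second gap is in the uniform quasi-disc scheme for the space-filling examples. You take a single bounded quasi-disc $D\supset\gamma(0,T]$ whose closure meets $\R$ only at $\gamma(0)$ and set $D_s=\psi_s(D)$. But conditions (1) and (2) of Theorem~\ref{qdisc} force $\overline{D_s}\supset K_T\setminus K_s$ and $D_s\cap K_s=\emptyset$; for the Hilbert curve $K_T\setminus K_s$ is the closure of the entire unfilled region, whose boundary meets $\R$ in an interval and meets $K_s=\gamma[0,s]$ along whole edges of dyadic squares, so $D_s$ is essentially forced to be $\H\setminus\gamma[0,s]$ and cannot be a similarity image of one fixed disc pinned to $\R$ at a single point (also, $\gamma[s,T]$ is a union of several copies of the curve at different scales, not one $\psi_s$-image). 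The paper handles this by taking $D_s=\H\setminus\Gamma[0,s]$ directly and proving it is a uniform quasi-disc via the John-domain criterion, with a crosscut argument organized by the dyadic squares the curve has already filled; an analogous (slightly modified) choice works for the half-gasket. Your scheme is fine for the van Koch curve, where the paper likewise uses images of a fixed complement-of-a-triangle under a quasiconformal automorphism of $\H$.
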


\noindent

Notice that the space-filling Hilbert curve and the half-Sierpinski gasket and  can be obtained as limits of simple
curves so that their Loewner driving terms are well-defined. See Figures \ref{space} and \ref{halfsierp},
and see Figure \ref{drivefig} for approximations of their driving functions.

\begin{figure}
\centering
\hfill
\includegraphics[scale=0.25]{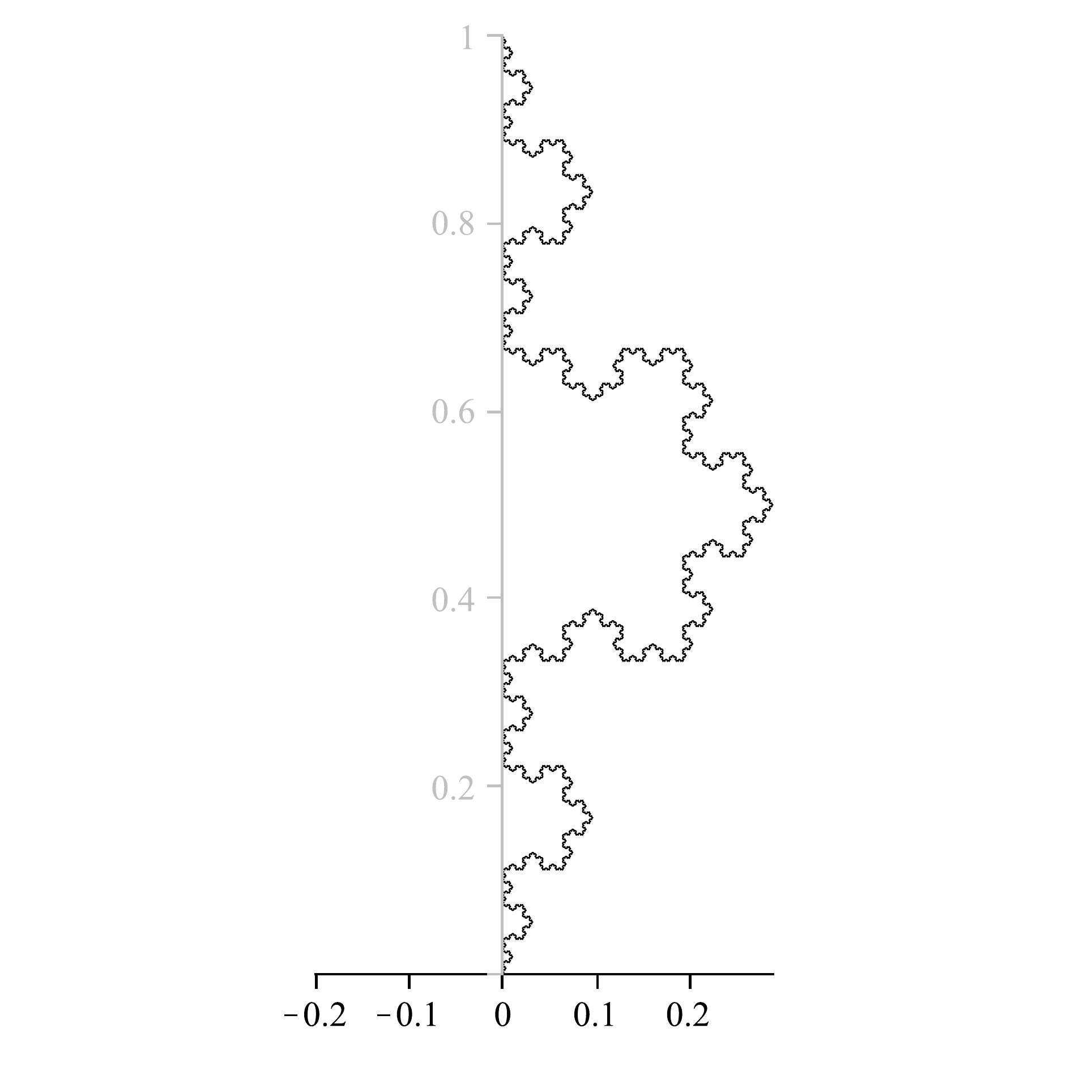}
\includegraphics[scale=0.25]{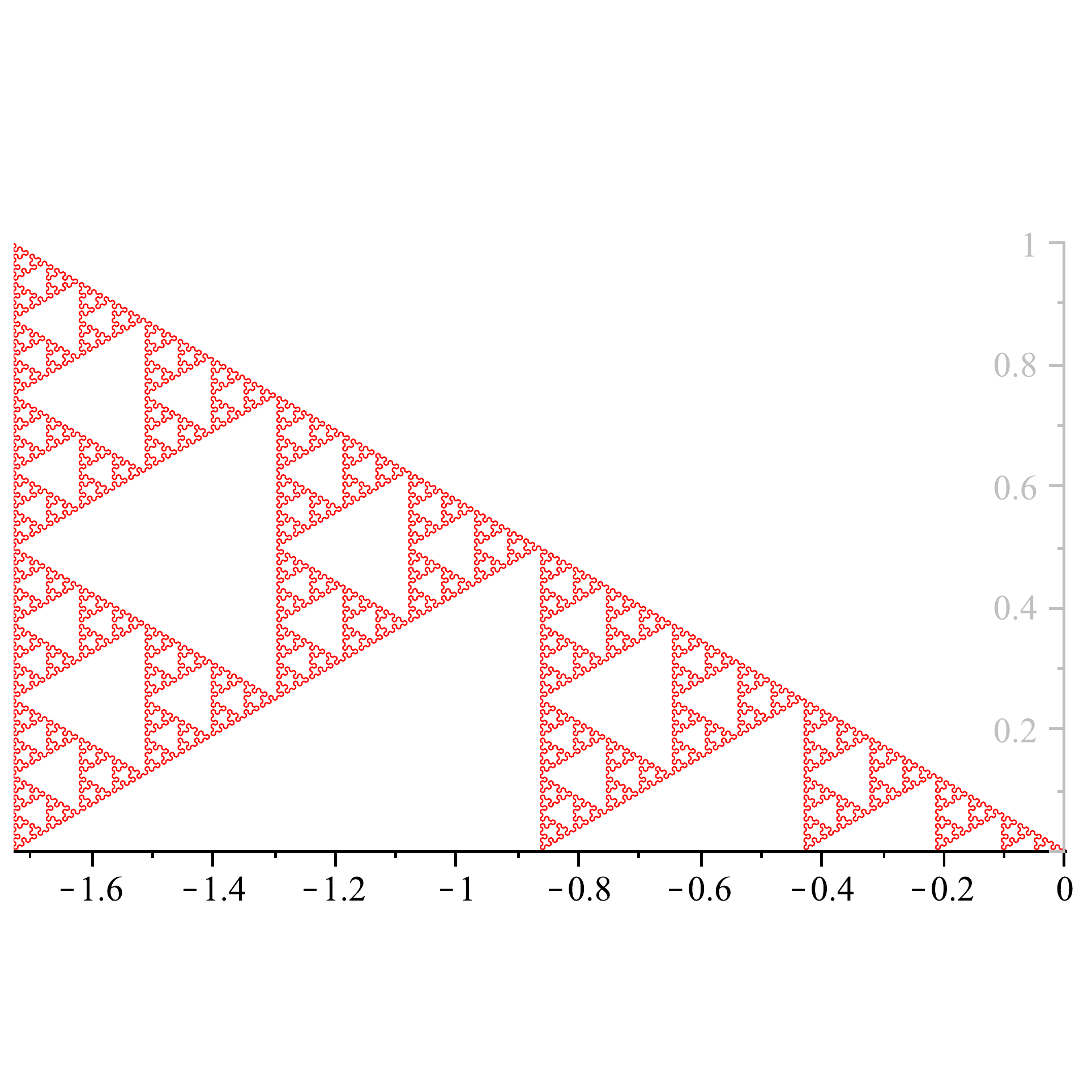}\hfill \hfill \hfill
 \includegraphics[scale=0.25]{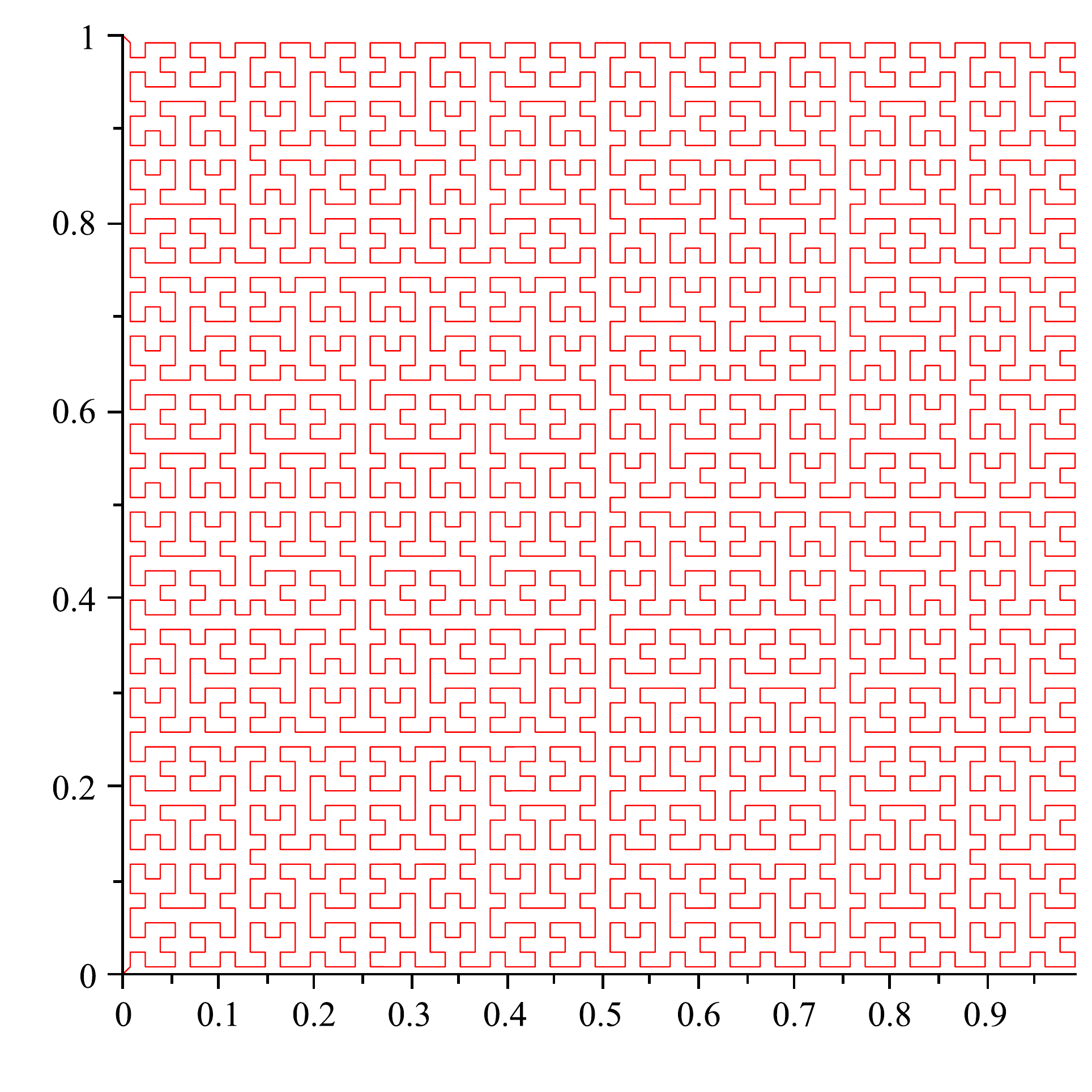}\hfill \hfill \hfill \hfill
\caption{Three curves with Lip(1/2) driving function. 
}\label{corfig}
\end{figure}

\noindent

The organization of the paper is as follows: In Section \ref{background} we review basic definitions,
facts and references and put our results in perspective to SLE. The expert can safely skip it.
Section \ref{phase} contains the proofs of (a slightly stronger version of) Theorem \ref{2ndphase} and Proposition \ref{counterexpl}.
It is independent from
Section \ref{s:domains} where we prove Theorem \ref{qdisc} and its corollary.

{\bf Acknowledgement.}
We thank David White for the pictures of the fractals, and Belmont University undergraduate students Andrew Hill, Matt Lefavor and Ben Stein,  who developed a JAVA  program which can approximate the corresponding driving term.  
We also thank Don Marshall for his comments on a draft of this paper.

\section{Background and Motivation}\label{background}

\subsection{A brief look at the Loewner equation}

In this section, we briefly review the chordal Loewner equation and some of its
standard properties used throughout the paper.

If  $\lambda :[0,T]  \rightarrow \mathbb{R}$ is continuous, then for each 
$z \in \overline{\mathbb{H}} \setminus \{\lambda(0)\}$ the  Loewner equation 
\be\label{le}
\frac{\partial}{\partial t} g_t(z) = \frac{2}{g_t(z)- \lambda(t)} \;\;\; , \;\;\;
g_0(z) = z 
\ee
has a solution on some time interval.
Set $T_z=  \sup\{s \in [0,T]: g_t(z)$ exists on $[0,s)\}$, 
and set $K_t = 
\{z \in \mathbb{H} : T_z \leq t\}$.  
Then  $\mathbb{H} \setminus K_t$ is a simply connected subdomain of $\mathbb{H}$,
 and $g_t$ is the unique conformal map from $\mathbb{H} \setminus K_t$ onto 
$\mathbb{H}$ with the hydrodynamic normalization 
$g_t(z)=z+\frac{2t}{z} + O\left(\frac{1}{z^2}\right)$ near infinity.
The function $\lambda(t)$ is called the driving term, 
and the compact sets $K_t$ are called the hulls generated (or driven) by $\lambda$.  We also consider the domains $\H \setminus K_t$ and the conformal maps $g_t$ to be generated (or driven) by $\l$.  

On the other hand, we may begin with  a sequence of continuously growing hulls $K_t$ with
$K_0=\emptyset$
(see \cite{La} for a precise definition). 
Re-parametrizing $K_t$ as needed, 
we may assume that the conformal maps $g_t \equiv g_{K_t}:\H\setminus K_t \to \H$ have the hydrodynamic normalization at infinity.  Then the maps $g_t$ satisfy the Loewner equation for some continuous
function $\lambda(t)$, and $K_t$ are the hulls generated by $\l(t)$.  Thus the Loewner equation provides a one-to-one correspondence between continuous real-valued functions and certain families of continuously growing hulls.

There is another version of the Loewner equation in the halfplane.  If
$\xi : [0,T] \rightarrow \mathbb{R}$ is continuous 
and $z \in \overline{\mathbb{H}} \setminus \{\xi(0)\}$, 
then the backward Loewner equation 
\be\label{ble}
\frac{\partial}{\partial t} f_t(z) = \frac{-2}{f_t(z)-
\xi(t)} \;\;\; , \;\;\;
f_0(z) = z 
\ee
has a solution on the whole time interval $[0,T]$.
Further, $f_t$ is a
conformal map from $\mathbb{H}$ into $\mathbb{H}$, and near infinity it
has the form  
$f_t(z)=z+\frac{-2t}{z} + O(\frac{1}{z^2}).$
 The two forms of Loewner's differential equation in the halfplane are 
related as follows.  Given  a
continuous function $\lambda$ on $[0,T]$, set $\xi(t)= \lambda(T-t)$.  Let
$g_t$ be the functions generated by $\lambda$ from \eqref{le}, and let
$f_t$ be
the functions generated by $\xi$ from \eqref{ble}.  
Then $f_t =g_{T-t} \circ g_T^{-1}$, and in particular $f_T = g_T^{-1}$.

We mention four simple (but important) properties of the chordal Loewner equation.  Assume that the hulls $K_t$ are generated by the driving term $\l(t)$.  Then
\begin{enumerate}
\item  {\it Scaling}: For $r>0$,
the scaled hulls $\tilde{K}_t := r K_{t/r^2}$ are driven by $r \l(t/ r^2 )$.
\item {\it Translation}: For $x \in \mathbb{R}$,
the driving term of $K_t+x$ is $\l(t)+x$.
\item {\it Reflection}:  The reflected hulls $R_I(K_t)$ are driven by $-\l(t)$, where
$R_I$ denotes reflection in the imaginary axis.  
\item {\it Concatenation}:
For fixed $T$, the mapped hulls $g_T(K_{T+t})$ are driven by $\l(T+t)$.
\end{enumerate}

\subsection{Lip$(1/2)$ driving functions}

A function $\lambda$ belongs to Lip$(1/2)$ if there exists $C >0$ so that
\bes
|\lambda(t) - \lambda(s)| \leq C \, |t-s|^{1/2}
\ees
for all $t,s$ in the domain of $\lambda$.  
The smallest such $C$ is called the Lip$(1/2)$ norm of $\lambda$ and is denoted by $\norm{\lambda}_{1/2}$.
Notice that the Lip$(1/2)$ norm is invariant under the above scaling, i.e.
$\norm{r \l(t/r^2 )}_{1/2}=\norm{\lambda}_{1/2}.$
Thus Lip$(1/2)$ forms a natural class of driving functions for the Loewner equation.
The following was shown in \cite{MR} and \cite{L}:

\begin{thm}\label{LMRthm}
If there exists $k \geq 1$ so that $\H\setminus K_t$ is a $k$-quasislit-halfplane for  all $t \in [0,T]$, 
then $\lambda$ is in Lip$(1/2)$ on $[0,T]$.
Conversely, if $\lambda\in Lip(1/2)$ with $\norm{\lambda}_{1/2}< 4$, 
then $\H\setminus K_t$ is a $k(\norm{\lambda}_{1/2})$-quasislit-halfplane for all $t$.
Further the constant 4 is sharp:~for each $k\geq 4$ there is a Lip$(1/2)$ function with norm $k$ 
that generates a non-slit-halfplane.
\end{thm}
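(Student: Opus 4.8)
The plan is to prove the three assertions in turn, using the standard half-plane capacity machinery for the chordal equation.

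\emph{Quasislits force Lip$(1/2)$.} Since the $K_t$ are increasing and each $\H\setminus K_t$ is a $k$-quasislit halfplane, the $K_t$ are nested quasi-arcs sharing the base point $\gamma(0)\in\R$, so there is a single curve $\gamma$ with $K_t=\gamma[0,t]$ and $\lambda(0)=\gamma(0)$. It suffices to bound $|\lambda(s+u)-\lambda(s)|$ by $C(k)\sqrt u$. By the concatenation property, $u\mapsto\lambda(s+u)$ drives $\tilde K_u:=g_s(K_{s+u}\setminus K_s)$, and by additivity of half-plane capacity $\hcap(\tilde K_u)=\hcap(K_{s+u})-\hcap(K_s)=2u$. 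Three facts then finish the proof: (i) $\tilde K_u$ is again a $k'$-quasislit with $k'=k'(k)$; (ii) for any $k'$-quasislit $Q$ one has $\diam(Q)\le C(k')\sqrt{\hcap(Q)}$, since reflecting $Q$ in $\R$ yields a quasi-arc rooted on $\R$ whose half-plane capacity is comparable, by quasiconformal distortion, to that of a vertical segment of the same diameter; and (iii) the elementary bound $|g_u(z)-z|\le c\,\diam(K_u)$ on $\overline\H\setminus K_u$, which in the limit as $z$ tends to the tip gives $|\tilde\lambda(u)-\tilde\gamma(u)|\le c\,\diam(\tilde K_u)$. Combining, $|\lambda(s+u)-\lambda(s)|=|\tilde\lambda(u)-\tilde\lambda(0)|\le|\tilde\lambda(u)-\tilde\gamma(u)|+\diam(\tilde K_u)\le C(k)\sqrt u$.

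\emph{Small norm forces the quasislit property.} I would first show the trace $\gamma(t)=\lim_{y\downarrow0}g_t^{-1}(\lambda(t)+iy)$ exists with $K_t=\gamma[0,t]$, and then verify that $\gamma$, reflected across $\R$, is a quasicircle with constant depending only on $\norm\lambda_{1/2}$; by the Ahlfors three-point characterization it suffices to prove the bounded-turning estimate $\diam(\gamma[s,t])\le M(\norm\lambda_{1/2})\,|\gamma(s)-\gamma(t)|$ for all $s<t$. By scaling and the Markov property this should reduce to a single self-similar model: for $\lambda(t)=c\sqrt{1-t}$ the substitution $w=g_t(x)=\sqrt{1-t}\,W$ turns the Loewner equation into the essentially autonomous ODE $(1-t)\dot W=\frac{W^2-cW+4}{2(W-c)}$ (autonomous after the time change $\tau=\log\frac{1}{1-t}$), whose equilibria $W=\tfrac12\big(c\pm\sqrt{c^2-16}\big)$ are non-real precisely when $|c|<4$. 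Non-real equilibria keep $\Im W$ bounded away from $0$, which is exactly what prevents the trace from approaching $\R$ tangentially; quantifying this and controlling a general Lip$(1/2)$ flow by comparison with the model (a Gronwall-type argument) yields the quasislit bound, with $\norm\lambda_{1/2}<4$ emerging as the condition that keeps the discriminant $c^2-16$ negative.

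\emph{Sharpness at $4$.} For $k\ge4$ set $\lambda_k(t)=k\sqrt{1-t}$ on $[0,1]$. The identity $\sqrt{1-s}-\sqrt{1-t}=(t-s)/(\sqrt{1-s}+\sqrt{1-t})$ together with $t-s\le 1-s$ gives $|\lambda_k(t)-\lambda_k(s)|\le k\sqrt{t-s}$, with equality in the limit $t\to1$, so $\norm{\lambda_k}_{1/2}=k$. On the other hand, here $c^2-16=k^2-16\ge0$, so the model ODE above has a real equilibrium; tracing through its explicit solution (the computation of Kager--Nienhuis--Kadanoff) shows that the trace then interacts directly with $\R$, so $K_1$ is not a quasi-arc and $\H\setminus K_1$ is not a slit halfplane.

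\emph{Main obstacle.} The technical crux of the first part is claim (i): the conformal map $g_s$ has a square-root-type singularity at the tip $\gamma(s)$, and one must show it nonetheless carries the quasi-arc $\gamma[s,s+u]$ emanating from that tip onto a quasi-arc whose constant depends only on $k$. In the converse, the delicate point is making the reduction to the self-similar model genuinely quantitative with no loss at the threshold $4$ — which is precisely where the discriminant $c^2=16$ enters and where the argument must be done carefully.
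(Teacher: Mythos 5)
First, note that the paper itself gives no proof of this theorem: it is quoted as background and attributed to \cite{MR} and \cite{L}, so your proposal can only be measured against those sources and against the machinery the paper does develop (Lemma \ref{cap} and Theorem \ref{qdisc}).

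Your first part is essentially the Marshall--Rohde argument and is sound in outline: concatenation plus additivity of $\hcap$, $\diam(Q)\leq C(k)\sqrt{\hcap(Q)}$ for quasislits, and $|\lambda(T)-\lambda(0)|\leq c\,\diam(K_T)$ (the latter is exactly Lemma \ref{cap} of the paper, proved there via logarithmic capacity). The one genuine hole is the claim (i) you yourself flag: that $g_s$ carries the quasi-arc $\gamma[s,s+u]$, which emanates from the tip where $g_s$ has a square-root singularity, onto a $k'(k)$-quasislit. This is true but is a real lemma in \cite{MR}, not a remark; as written your proof of the forward direction is circular at that point (you use the quasislit property of $\tilde K_u$ to get $\diam(\tilde K_u)\lesssim\sqrt{\hcap}$). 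Note that the paper's own Theorem \ref{qdisc} is designed precisely to bypass this: one does not need $\tilde K_u$ to be a quasislit, only the weaker estimate $\diam(\tilde K_u)\leq C\max\{\Im z: z\in\tilde K_u\}$, which follows from quasisymmetry of $g_s$ on a quasidisc $D_s$ containing the future hull (take $D_s$ to be the complement in the quasislit halfplane of a slightly thickened $K_s$). Your sharpness argument is complete and correct: $\norm{k\sqrt{1-t}}_{1/2}=k$ by the identity you give, and the explicit Kager--Nienhuis--Kadanoff solution shows the trace returns to $\R$ when $k\geq4$.

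The serious gap is in the converse. You correctly identify the self-similar model $\lambda(t)=c\sqrt{1-t}$, the time-changed ODE $(1-t)\dot W=\frac{W^2-cW+4}{2(W-c)}$, and the role of the discriminant $c^2-16$ (this is the same change of variables as equation \eqref{LDE-G} in the paper). But the sentence ``quantifying this and controlling a general Lip$(1/2)$ flow by comparison with the model (a Gronwall-type argument) yields the quasislit bound'' is where the entire content of \cite{L} lives, and it is not supplied. Concretely: (a) you must first establish that the trace exists and is continuous for $\norm{\lambda}_{1/2}<4$, which is itself nontrivial and is done by proving a uniform lower bound on $\Im f_t(\xi(t)+iy)/y$ along the \emph{backward} flow, where the comparison with the model enters through a differential inequality, not a generic Gronwall estimate; (b) the bounded-turning condition must be verified for \emph{every} pair $s<t$, i.e.\ at all scales and at all points along the curve and across the reflection in $\R$, which requires combining the scaling and concatenation invariance of the Lip$(1/2)$ norm with the pointwise estimate from (a); and (c) the constant must not degenerate as $\norm{\lambda}_{1/2}\uparrow 4$ only in the qualitative sense (the quasislit constant $k(\norm{\lambda}_{1/2})$ may and does blow up there), so the comparison has to be set up so that a strict inequality $c<4$ yields a strictly negative discriminant uniformly over all rescaled windows. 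As it stands, the converse direction is a correct program with the hardest step asserted rather than proved.
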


A $k$-quasislit-halfplane is the image of $ \mathbb{H} \setminus [0,i]$
under a $k$-quasiconformal automorphism of $\mathbb{H}$ fixing $\infty$.
For example, it is not hard to show that the complement of the van Koch curve (Figure \ref{corfig})
is a quasislit-halfplane (see \cite{MR}.)

The driving functions $k\sqrt{1-t}$, for $k \geq 4$, are the simplest examples of Lip(1/2) driving functions that do not generate slit-halfplanes for all time.  Rather, for $k\geq 4$ the hull generated by $k\sqrt{1-t}$ is a curve that hits back on the real line and forms a bubble at time 1.  This situation is studied in detail in \cite{KNK} (from a computational viewpoint) and in \cite{LMR} (from a geometric viewpoint.)

In \cite{MR}, there is another example of ``bad'' behavior generated by a Lip$(1/2)$ driving term: a curve which spirals infinitely around a disc.  At the final time, the hull is not even locally connected.  This example can be constructed so that its driving term has Lip$(1/2)$ norm arbitrarily close to 4 (see \cite{LMR}).

\subsection{SLE and self-similar curves}

The three curves in Figure \ref{corfig} are all self-similar. For instance, scaling the whole van Koch curve by $1/3$ gives the first third of the curve, whereas the scaling factor is 1/2 for the half-Sierpinski triangle and 1/4 for the Hilbert curve.
By the scaling property of the Loewner equation, this is reflected in a self-similarity of the driving terms:
For the van Koch curve, $3\lambda(t/9)=\lambda(t),$ as is seen Figure \ref{drivefig}.
The Schramm-Loewner Evolution displays a similar form of self-similarity.

\begin{figure}
\centering
\hfill
\includegraphics[scale=0.25]{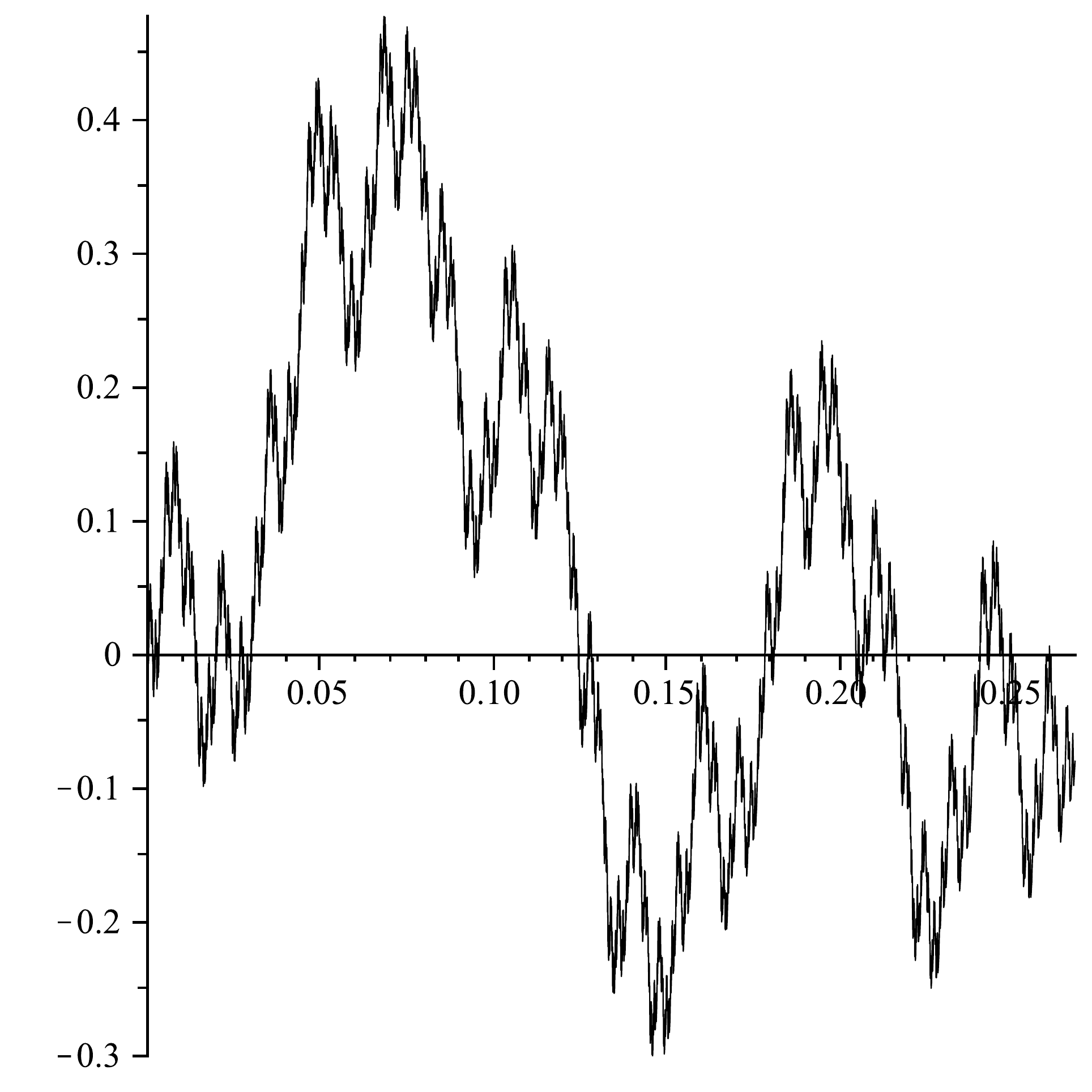}\hfill
\includegraphics[scale=0.25]{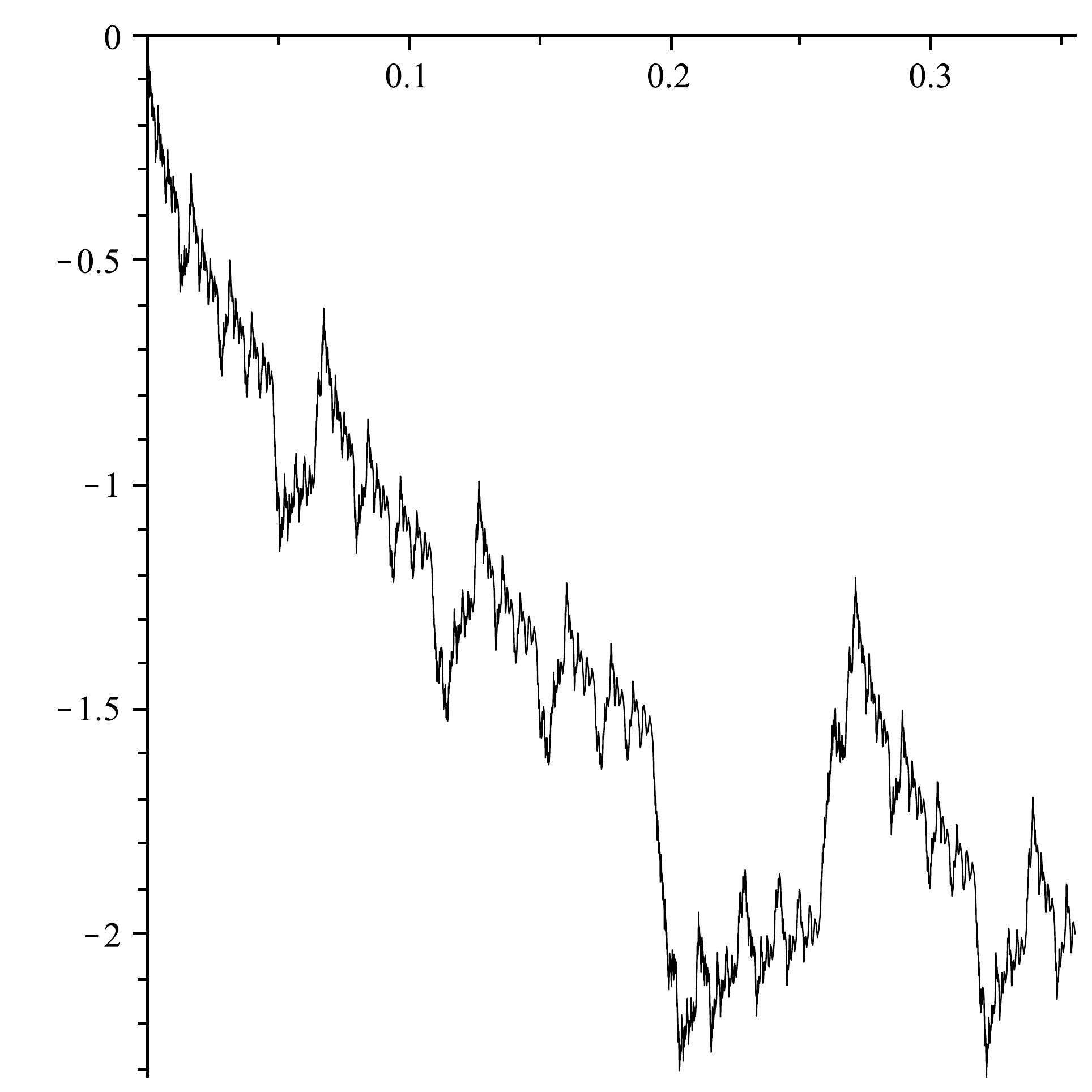}\hfill
\includegraphics[scale=0.25]{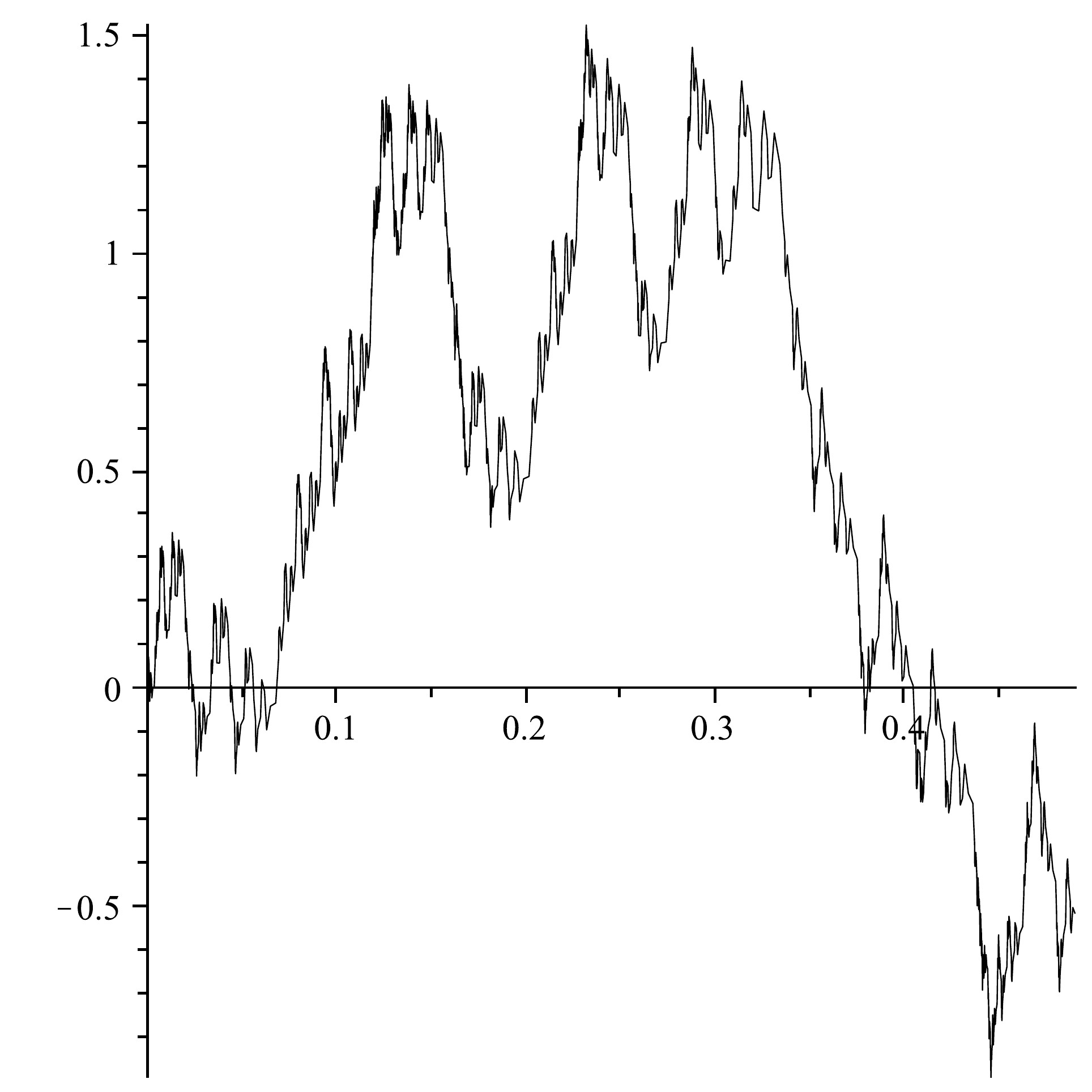}\hfill \hfill
\caption{The driving functions of the van Koch, Sierpinski and Hilbert curves from Fig. \ref{corfig}.}\label{drivefig}
\end{figure}

For $\kappa \geq 0$, chordal SLE$_{\kappa}$ is the random family of
hulls generated by the driving term $\lambda(t) =\sqrt{\kappa} B_t$, where $B_t$ is standard Brownian motion. 
For SLE, it is possible to define an almost surely continuous path $\gamma: [0,\infty) \rightarrow \overline{\mathbb{H}}$, called the trace, so that the
hull $K_t$
 generated by
$\lambda(t)
= \sqrt\kappa B_t$ is the curve $\gamma[0,t]$ filled in.  More precisely, $K_t$ is the complement of the unbounded component of $\mathbb{H} \setminus
\gamma[0,t]$.  See \cite{RS} and, for the case
$\kappa=8$, \cite{LSW14}. Because $r B_{t/r^2}$ has the same distribution as $B_t,$ the law on Loewner traces
induced by SLE is invariant under scaling. Thus it is not very surprising that the deterministic and the
stochastic Loewner equation exhibit very similar phenomena.

The following classification for the SLE$_{\kappa}$ trace was shown in \cite{RS}:
\begin{enumerate}
\item[] For $\kappa \in [0,4]$, $\gamma (t)$ is almost surely a simple
path contained in $\mathbb{H} \cup \{0\}$.
\item[]  For $\kappa \in (4,8)$, $\gamma (t)$ is almost surely a
non-simple path.
\item[] For $\kappa \in [8,\infty)$, $\gamma(t)$ is almost surely a
space-filling curve.
\end{enumerate}
In the deterministic case,  Theorem \ref{LMRthm} and Theorem \ref{2ndphase} give a similar picture.
There are some differences, though: As mentioned above, the existence of a (continuous) trace is no longer 
guaranteed if the norm exceeds 4. Even when assuming the existence of the trace, Lip(1/2) norm $>4$ does not guarantee that the path self-intersects
(for instance, for each $k,$ the trace of $k \sqrt{t}$ is a straight line).
And finally, for $\kappa<4$ the SLE traces have the important property of being uniquely determined by their welding homeomorphism (this easily follows from the H\"older property of the domain $\H\setminus K_t$ \cite{RS} together with 
the Jones-Smirnov removability theorem \cite{JS}). This property is shared by traces of Lip(1/2) norm $<4$ (because quasislits
are conformally removable), but our last example of Corollary \ref{expls} shows that this is no longer true if the norm
is $>4$ (see the discussion at the end of Section \ref{s:expls}).

\section{A Second Phase Transition for Lip$(1/2)$ Driving Terms}\label{phase}

In this section we prove Theorem \ref{2ndphase}, after first considering an illuminating example that is nearly 
(but not quite) a counter-example to the theorem.

\subsection{An example with dense image and small norm}

Let $P=\{z_1, \, z_2, \,z_3, \cdots \}$ be a countable collection of distinct points in $\H$.
We will construct a Lip$(1/2)$ driving function with norm at most 4 that generates a curve which passes through the points in $P$.

We begin by showing that given  $x \in \R$ and $z \in \H$, 
there exists a driving function with  Lip$(1/2)$ norm at most 4
that generates a simple curve from $x$ to $z$ in $\H \cup \{x\}$. 
If Re$(z) = x$, then the constant driving function $\lambda(t) \equiv x$ 
(defined on an appropriate time interval) 
will generate a vertical line connecting $x$ to $z$.  
If Re$(z) \neq x$, then we obtain the desired curve by shifting, scaling and reflecting
the driving term $\l(t)=4\sqrt{1-t}$ appropriately (and again choosing the appropriate time interval), as illustrated in Figure \ref{curvethruZ}.
To see why this will work, note that  the curve generated by $4\sqrt{1-t}$ 
 is a simple curve from 4 to 2 in $\overline{\H}$, 
 and for $\pi/2 < \theta < \pi,$ each ray $\{4+re^{i\theta} \, : \,   r>0\}$ intersects the curve in exactly one point.  
See \cite{KNK} or \cite{LMR}.

\begin{figure}
\centering
\includegraphics[scale=0.8]{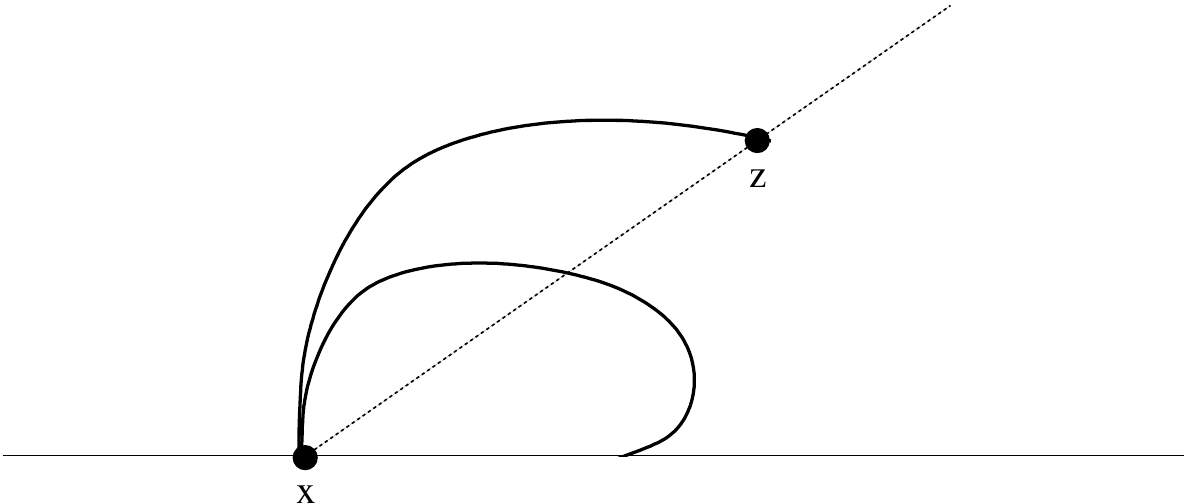}
\caption{A curve from $x$ to $z$ obtained by scaling  the trace driven by $\l(t) = x+4-4\sqrt{1-t}$.}\label{curvethruZ}
\end{figure}

Next, we inductively construct a driving function $\l_n:[0,T_n]\to\R$ with $||\l_n||_{1/2}\leq 4$
in such a way that the associated trace contains all points $z_1,...,z_n$ and so that
$\l_n$ restricted to $[0,T_{n-1}]$ is $\l_{n-1}.$ 
To begin, use the above construction to obtain $\l_1$ as driving function that generates a curve from 0 to $z_1$. 

Now assume that $\l_n$ is already defined.
If $z_{n+1}$ is already in the trace, there is nothing to do, and we set $T_{n+1}=T_n.$
Otherwise we need to append a curve joining the tip to $z_{n+1}$ without increasing the norm.
This is achieved by first setting $\l_{n+1}(t)\equiv \l_n(T_n)$ for $t\in [T_n,\, T_n+\tau_n]$,
 where $\tau_n$  will be determined shortly. 
And secondly, given $\tau_n$, we use the above construction
to obtain a driving term $\hat{\l}(t)$ on some interval $[0,\sigma_n]$ that generates a curve from 
$x_n:=\l_n(T_n)$ to $w_n:=g_{T_n+\tau_n}(z_{n+1})$. 
We then define $T_{n+1}=T_n+\tau_n+\sigma_n$ and
$\l_{n+1}(t)\equiv \hat{\l}(t-(T_n+\tau_n))$ for $t\in [T_n+\tau_n,\,T_{n+1}].$

It remains to show that $\tau_n$ can be chosen so that the Lip$(1/2)$ norm of $\l_{n+1}$ is still at    most 4.
Notice that
$$w_n=g_{T_n+\tau_n}(z_{n+1})= \sqrt{4\tau_n+(g_{T_n}(z_{n+1}) - \l_n(T_n))^2}+\l_n(T_n),$$
since the solution to the Loewner equation driven by the constant $\lambda\equiv0$ is $\sqrt{4t+z^2}.$
By the scaling property there exists $C$ so that 
\be  \label{t3}
\sigma_n \leq C \cdot |w_n - x_n|^2 = C \cdot \left( 4\tau_n+(g_{T_n}(z_{n+1}) - \l_n(T_n))^2 \right).
\ee
In order to guarantee that $\l$ has Lip$(1/2)$ norm at most 4 on $[0, \, T_{n+1}]$, it is enough to require that
\bes
\sqrt{T_n}+\sqrt{\sigma_n} \leq \sqrt{T_{n+1}} = \sqrt{T_n+\tau_n+\sigma_n},
\ees
or equivalently 
\bes 
4T_n \sigma_n \leq \tau_n^2.
\ees
By \eqref{t3} this can easily be accomplished by choosing $\tau_n$ large enough, thus finishing the inductive step of the construction.

\bigskip
\noindent
{\bf Remark.} A slight modification of our construction allows us to arrange that $z_1,z_2,...$ will be visited in this order: In case that a point $z_k$ with $k>n+1$ is contained in the curve from $z_n$ to $z_{n+1}$, 
we must adjust the construction of the driving
term on $[T_n, \, T_{n+1}]$.  
If $z_k$ is contained in the curve by time $T_n+\tau_n$, 
replace the constant driving term  on $[T_n, \,T_n + \tau_n]$ by a Lip$(1/2)$ driving term that is close to a constant but allows the generated curve to avoid all the points $z_i$ for $i>n$.  
This is possible since there is an uncountable family of disjoint curves, with each curve generated by a Lip$(1/2)$ driving term that is close to a constant. 
We make a similar modification if $z_k$ is contained in the curve after time $T_n+\tau_n$.  
There is enough flexibility in the construction that these modifications can be made without increasing the Lip$(1/2)$ norm.

\subsection{How the Loewner equation captures points in $\R$}

In preparation for the proof of Theorem \ref{2ndphase}, 
we will investigate how the Loewner equation \eqref{le} captures real points.
 For a point $x \in\R \setminus \{\lambda(0)\}$, we say $x$ is captured (or killed) by $\l$ 
  if  there exists $t$ so that $g_t(x) = \lambda(t)$, where $g_t$ is generated by $\l$.  
This will occur, for instance, when the trace hits back on the real line, as in Figure \ref{capture}.
We give this event the name ``capture" because $g_t(x)$ attempts to flee from $\lambda(t)$ (that is, the direction of its movement is in the opposite direction from $\l$), and the smaller the distance between  $g_t(x)$ and $\lambda(t)$, the faster   $g_t(x)$ will move to attempt escape.
We first normalize the situation by assuming the capture takes place at time $t=1.$

\begin{figure}
\centering
\includegraphics[scale=0.8]{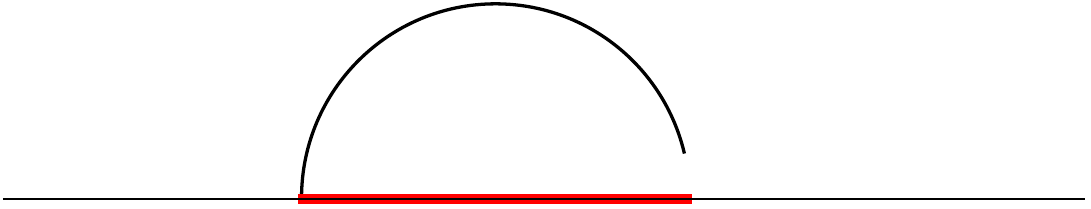}
\caption{When $t=1$ the trace driven by $\l(t) = -3\sqrt{2} \sqrt{1-t}$ will hit back on $\R$ and  each point in the red interval will be captured.}\label{capture}
\end{figure}

Assume that $\l$ is defined on $[0,1]$,  $\,\l(1)=0,$ $\, \l(t) >0$ for $t \in [0,1)$,  and  $g_t$ is generated by $\l$.
Momentarily we will consider the the situation of a point $x < \l(0)$ captured by $\l$ at $t=1$.
First, however, we discuss
a time-changed version of the Loewner equation,
which was introduced in \cite{LMR}.
Set
 \bes
 s=-\ln(1-t), \;\;\; \text{or equivalently,}  \;\;\;t=1-e^{-s}, 
\ees
and define
\bes
G_s(z) := e^{s/2} \, g_{1-e^{-s}}(z) = \frac{g_{t}(z)}{  \sqrt{1-t}} \;\;\; \text{and} \;\;\;
\sigma(s) := e^{s/2}\l(1-e^{-s}) = \frac{\l(t) } {\sqrt{1-t}}.
\ees
Note that $\s$ is defined on $[0, \infty)$, 
and   if $\l \in$ Lip$(1/2)$ with $\norm{\l}_{1/2} \leq C$, then $\s \leq C$ for all $s \in [0, \infty)$.
 By \eqref{le},
\be\label{LDE-G}
\frac{\partial}{\partial s} G_s=
 \frac2{G_s-\s(s)} + \frac{G_s}{2} \;\;\; , \;\;\;G_0(z)=z
\ee
and we say that $G_s$ is generated by $\sigma$. 
 
For $x\in \R \setminus \{ \sigma(0) \}$,  let $x_s = G_s(x)$ be the solution to \eqref{LDE-G}.  
If $x$ is not captured by $\l(t)$ before time $t=1$ (that is, if $\l(t)\neq g_t(x)$ for all $t<1$),
then $x_s=g_t(x)/\sqrt{1-t}$ will exist for all $s \in [0, \infty)$.   
Rewrite \eqref{LDE-G} as
\be \label{gde2}
\frac{\partial}{\partial s} x_s=
 -\frac{1}{2}\frac{x_s^2-\s(s) \, x_s +4}{\s(s)-x_s}.
\ee
When $\s(s) <4$, then the numerator of \eqref{gde2} is always positive, 
implying that $x_s$ is decreasing for $x_s < \s(s)$.

When $\s(s) \geq 4$, we can factor the numerator of \eqref{gde2} to obtain
\be \label{gde3}
\frac{\partial}{\partial s} x_s=
 -\frac{1}{2}\frac{(x_s-A_s)(x_s-B_s)}{\s(s)-x_s},
\ee
where
\bes
A_s := \frac{\s(s) + \sqrt{\s^2(s) -16}}{2} \;\;\; \text{and} \;\;\;  B_s := \frac{\s(s) - \sqrt{\s^2(s) -16}}{2}.
\ees
Now  \eqref{gde3} shows that
$x_s$ is decreasing when $x_s$ is in  $(-\infty, B_s)$ or in $(A_s, \s(s))$,
and that $x_s$ is increasing when $x_s$ is in $(B_s, A_s)$ or $(\s(s), \infty)$.
Roughly, we can think of $A_s$ as attracting and $B_s$ as repelling.  See Figure \ref{Sflow}.
In order for $A_s$ and $B_s$ to be defined for all times $s$,  we set $A_s=B_s=2$ whenever $\s(s) < 4$. 

\begin{figure}
\centering
\includegraphics[scale=0.9]{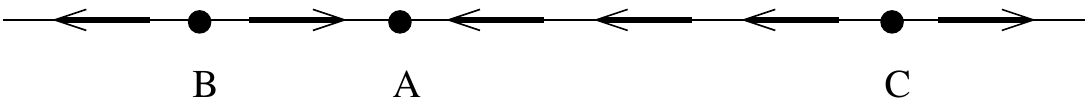}
\caption{The real flow under \eqref{LDE-G} when $\s(s)> 4$: points flow towards $A=A_s$ and away from $B=B_s$ and $C=\sigma(s)$.}\label{Sflow}
\end{figure}
An instructive class of examples is the family of functions $\l(t) = C\sqrt{1-t}$.  Here $\sigma(s) \equiv C $.  In this example, when $C >4$, $A_s \equiv A$ is  an attracting fixed point and $B_s \equiv B$ is a repelling fixed point. 
Every point $x\in[A,C]$ is captured by $\l$ at time $t=1,$
in such a way that $x_s$ decreases to $A$ for all $s>0.$
The next lemma shows that the situation is similar in general: 
Assuming that $x<  \l(0) $ is captured by $\l$ at time $t=1,$
the lemma shows that $x_s$  decreases steadily towards $A_s$ 
until it 
eventually reaches a small neighborhood of the interval $[A_s, \, B_s]$, 
in which it then stays indefinitely.
\begin{lemma}\label{interval}
Let $0<\e<1/2$.
Suppose that
$ \norm{\l}_{1/2} \leq 4+2\e$
and that  $x<  \l(0) $ is captured by $\l$ at time $t=1$ 
with $\; g_1(x)=\l(1)=0$.
There is a finite time $S_0$ and an interval $I$ (containing $[B_s, \, A_s]$) of length $5\sqrt{\e}$ 
so that  $x_s \in I$ for $s \geq S_0$.
\end{lemma}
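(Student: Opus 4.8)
The plan is to analyze the flow line $s\mapsto x_s$ directly, using two global constraints that follow from the hypotheses. The first is that $x_s<\sigma(s)$ for all $s\in[0,\infty)$: since $x<\lambda(0)=\sigma(0)$ and $x$ is captured only at $t=1$, the continuous functions $t\mapsto g_t(x)$ and $t\mapsto\lambda(t)$ cannot meet on $[0,1)$, so $g_t(x)<\lambda(t)$ there, which rescales to $x_s<\sigma(s)$. The second is that $\sigma(s)\le 4+2\e$ for all $s$, since $\lambda(1)=0$ forces $\sigma(s)=e^{s/2}\lambda(1-e^{-s})\le\norm{\lambda}_{1/2}\le 4+2\e$. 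Because $B_s$ is decreasing and $A_s$ increasing in $\sigma(s)$ (with the convention $A_s=B_s=2$ when $\sigma(s)<4$), this confines them:
\bes
B_{\min}:=2+\e-\sqrt{4\e+\e^2}\ \le\ B_s\ \le\ 2\ \le\ A_s\ \le\ A_{\max}:=2+\e+\sqrt{4\e+\e^2}
\ees
for every $s$, and $A_{\max}-B_{\min}=2\sqrt{4\e+\e^2}<5\sqrt\e$ precisely because $\e<1/2$. I would then fix, once and for all, the interval $I=[p,q]$ with $p=B_{\min}-\eta$, $q=A_{\max}+\eta$, and $\eta:=\tfrac12\bigl(5\sqrt\e-2\sqrt{4\e+\e^2}\bigr)>0$; thus $|I|=5\sqrt\e$ and $p<B_s\le A_s<q$ (so $[B_s,A_s]\subset I$) for all $s$.

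\textbf{Step 1 (the flow never drops below $p$).} I would show that if $x_{s_1}<p$ for some $s_1$, then $x$ is not captured at $t=1$, a contradiction. Since $p\le B_{\min}\le B_s$, by \eqref{gde3} (when $\sigma(s)\ge4$) and \eqref{gde2} (when $\sigma(s)<4$) the flow is strictly decreasing at $x_{s_1}$, and a first-exit argument gives $x_s<B_s$, hence $x_s$ strictly decreasing, for all $s\ge s_1$: if $s^\ast$ were the first time with $x_{s^\ast}=B_{s^\ast}$, monotonicity on $[s_1,s^\ast)$ would force $x_{s^\ast}\le x_{s_1}<p\le B_{\min}\le B_{s^\ast}$, impossible. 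So $x_s\to-\infty$ as $s\to\infty$ (no finite-time escape, by comparison with a linear equation once $|x_s|$ is large). But \eqref{gde2} gives the identity
\bes
\frac{d}{ds}\bigl(e^{-s/2}x_s\bigr)=\frac{-2\,e^{-s/2}}{\sigma(s)-x_s},
\ees
and $e^{-s/2}x_s=g_t(x)$ with $t=1-e^{-s}$. Once $x_s$ is very negative, $\sigma(s)-x_s\ge|x_s|$ is large, the right-hand side is integrable in $s$, and $g_t(x)$ converges as $s\to\infty$ to a strictly negative limit, contradicting $\lim_{t\to1}g_t(x)=\lambda(1)=0$. Hence $x_s\ge p$ for all $s\ge0$.

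\textbf{Step 2 (descent from above and trapping).} Whenever $x_s>q$, the constraints $x_s<\sigma(s)$ and $q\ge A_s$ place $x_s$ in $(A_s,\sigma(s))$ if $\sigma(s)\ge4$, and in the region where the right-hand side of \eqref{gde2} is negative if $\sigma(s)<4$; in either case $x_s<\sigma(s)$ forces $\sigma(s)>q>2$. Factoring the numerator of \eqref{gde2} and using $x_s-A_s\ge q-A_{\max}=\eta$, $x_s-B_s\ge q-2>0$ (or $x_s^2-\sigma(s)x_s+4\ge(q-2)^2\ge\eta(q-2)$ when $\sigma(s)<4$), together with $0<\sigma(s)-x_s\le(4+2\e)-q$, I get a uniform bound: the right-hand side of \eqref{gde2} is $\le-c_1<0$ whenever $x_s\ge q$, for some $c_1=c_1(\e)>0$. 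So, starting from $x_0<\sigma(0)\le 4+2\e$, the flow reaches $q$ within the finite time $S_0:=((4+2\e)-q)/c_1$ (take $S_0=0$ if $x_0\le q$), and the same bound shows $x_s$ is strictly decreasing whenever $x_s=q$, so $x_s\le q$ for all $s\ge S_0$. Together with Step 1, $x_s\in[p,q]=I$ for all $s\ge S_0$, which is the claim.

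The hard part, and the reason the two hypotheses are exactly what is needed, is that $A_s$ and $B_s$ move with $s$, so the tidy ``$A_s$ attracts, $B_s$ repels'' picture only holds for frozen $\sigma$. The Lip$(1/2)$ bound is used to confine $A_s,B_s$ to a window of width $<5\sqrt\e$ about $2$, which is what lets one fixed interval $I$ trap the flow from above; and the flow's persistent downward drift below $B_s$ — which would let it slide out the bottom of any such interval — is ruled out not by the flow's geometry but by the standing assumption that $x$ is genuinely captured at $t=1$, exploited through the identity for $\frac{d}{ds}(e^{-s/2}x_s)=g_t(x)$. I expect the one estimate requiring care to be the uniform lower bound in Step 2, since the downward drift degenerates as $x_s\downarrow A_s$; this is why $q$ must be taken strictly above $\sup_s A_s$, and it is this gap $\eta>0$ that makes $S_0$ finite.
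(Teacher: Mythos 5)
Your proposal is correct and follows essentially the same route as the paper's proof: use $\sigma(s)\le 4+2\e$ to confine $[B_s,A_s]$ to a window of width $<5\sqrt{\e}$, get a uniform downward drift above the interval (giving the finite entry time $S_0$ and preventing upward escape), and rule out downward escape by showing it would force $x_s$ negative, contradicting the capture hypothesis via $x_s=g_t(x)/\sqrt{1-t}>0$. The only differences are cosmetic: the paper places $I=[L,L+5\sqrt\e]$ with $L=B_{\min}$ and derives the lower bound directly from the differential inequality $-\partial_s x_s\ge (x_s-L)^2/12$ together with $x_s>0$, whereas you pad $I$ symmetrically and reach the same contradiction through the identity for $\frac{d}{ds}(e^{-s/2}x_s)$.
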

\begin{proof}
The fact $ \norm{\l}_{1/2} \leq 4+2\e$ implies that $\s(s) \leq 4+2\e$ for all $s$.  Thus,
\bes
A_s \leq 2+ \e + \sqrt{\e(\e+4)} \;\;\; \text{and} \;\;\;
B_s \geq 2+ \e - \sqrt{\e(\e+4)} =: L.
\ees
Set $I=[L, \, L + 5 \sqrt{\e}]$.  
Note that $[B_s, \, A_s]$ will be contained in $I$ for all $s \geq 0$.

First assume that  $x_s > L + 5 \sqrt{\e}$. In order 
to show that $x_s$ decreases towards $I$, recall
\bes
-\frac{\partial}{\partial s} x_s =  \frac{1}{2}\frac{x_s^2-\s(s) \, x_s +4}{\s(s)-x_s}.
\ees
The right hand side is decreasing in $\s$ and increasing in $x_s$,  
so that comparing to $\s_s=4+2\e$ and $x_s=L + 5 \sqrt{\e}$ yields a positive lower bound on 
$-\partial_s \, x_s.$
This proves that there exists $S_0>0$ so that $x_s \leq L+5\sqrt{\e}$ for $s \geq S_0$.

Next, assume that $-1 \leq x_s <L$.
Then 
\bes
-\frac{\partial}{\partial s} x_s 
\geq   \frac{(x_s-L)^2}{12}.
\ees
Thus if there is some $s$ with $-1 \leq x_s <L$, there will be a finite time $S_1$ when $x_{S_1} =-1$. On the other hand,
since $g_t(x)$ decreases to 0 as $t \rightarrow 1$, we must have that $x_s = g_t(x)/\sqrt{1-t} > 0$ for all $s$.
 This contradiction proves that $x_s \in I$ for all $s \geq S_0$.

\end{proof}

Now we will exploit the fact that $x_s$ cannot decrease out of the interval $I$, and 
show that $\sigma$ cannot be bounded above by a constant $M<4$ on a large time interval.

\begin{lemma} \label{tail}
Let $0<\e< 1/2$ and $0< M < 4$.  
Suppose that
$ \norm{\l}_{1/2} \leq 4+2\e$
and that  $x<  \l(0) $ is captured by $\l$ at time $t=1$ 
with $\; g_1(x)=\l(1)=0$.
Let $S_0$  be given as in the previous lemma.
Then there exists $\Delta < \infty$ so that
if $\s(s) < M$ on the time interval $[s_1, \, s_2]$ with $s_1 \geq S_0$, 
then  $s_2-s_1 \leq \Delta.$
In particular, we may take $\Delta = 10 \sqrt{\e} \, (4-M)^{-1}$. 
\end{lemma}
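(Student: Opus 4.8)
The plan is to integrate the time-changed equation \eqref{gde2} along $[s_1,s_2]$, using Lemma \ref{interval} to trap $x_s$ in an interval of length $5\sqrt{\e}$. The first thing to pin down is the sign of the denominator in \eqref{gde2}. Since $\l(t)>0$ for $t\in[0,1)$ while $g_1(x)=\l(1)=0$, and since $t=1$ is the \emph{first} time at which $g_t(x)=\l(t)$ (with $g_0(x)=x<\l(0)=\s(0)$), continuity gives $g_t(x)<\l(t)$ for every $t\in[0,1)$, i.e. $x_s<\s(s)$ for all $s\in[0,\infty)$. On $[s_1,s_2]$ we therefore have $x_s<\s(s)<M<4$, so the right-hand side of \eqref{gde2} is strictly negative there and $x_s$ is strictly decreasing.

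Next I would estimate the speed of $x_s$ by the substitution $d=d(s):=\s(s)-x_s>0$. Replacing $x_s$ by $\s(s)-d$ in the numerator of \eqref{gde2} and simplifying,
\bes
-\partial_s x_s \;=\; \frac12\,\frac{d^2-\s(s)\,d+4}{d}\;=\;\frac12\!\left(\Big(d+\tfrac4d\Big)-\s(s)\right)\;\geq\;\frac12\bigl(4-\s(s)\bigr)\;\geq\;\frac{4-M}{2},
\ees
where the middle inequality is just $d+4/d\geq4$ for $d>0$, and the last uses $\s(s)<M$ on $[s_1,s_2]$.

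Finally, integrating this bound over $[s_1,s_2]$ gives $x_{s_1}-x_{s_2}\geq \tfrac12(4-M)(s_2-s_1)$. Since $s_2\geq s_1\geq S_0$, Lemma \ref{interval} places both $x_{s_1}$ and $x_{s_2}$ in the interval $I$, whose length is $5\sqrt{\e}$; hence $x_{s_1}-x_{s_2}\leq 5\sqrt{\e}$. Combining the two estimates yields $s_2-s_1\leq 10\sqrt{\e}\,(4-M)^{-1}=\Delta$, as claimed.

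The argument is short, and the only places needing care are the monotonicity input $x_s<\s(s)$ (without which \eqref{gde2} does not force $x_s$ to decrease) and spotting the substitution $d=\s-x_s$, which collapses the speed estimate to the elementary bound $d+4/d\geq4$; after that it is a one-line integration against the confinement supplied by the previous lemma.
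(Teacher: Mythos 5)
Your proof is correct and follows essentially the same route as the paper: confine $x_s$ to the interval $I$ of length $5\sqrt{\e}$ via Lemma \ref{interval}, establish the uniform speed bound $-\partial_s x_s\geq(4-M)/2$, and integrate. The only (cosmetic) difference is how the speed bound is obtained --- you substitute $d=\s(s)-x_s$ and use $d+4/d\geq 4$, whereas the paper first uses monotonicity in $\s$ to reduce to $\s\equiv M$ and then minimizes the right-hand side at $x_s=M-2$; both give the same constant.
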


\begin{proof}

Assume that $\s(s) <M$ on the time interval $[s_1, \, s_2]$ with $s_1 \geq S_0$.  From Lemma \ref{interval}, $x_s \in I$ for $s \in [s_1, \, s_2]$.
We also know that $x_s$ will be decreasing on  $[s_1, \, s_2]$, since $\s(s) < 4$.  
We wish to determine the amount of time needed for $x_s$ to decrease from the right endpoint of $I$ to the left endpoint of $I$.   Since the right hand side of
\bes
-\frac{\partial}{\partial s} x_s =  \frac{1}{2}\frac{x_s^2-\s(s) \, x_s +4}{\s(s)-x_s}
\ees
is decreasing in $\s$, the larger the value of $\s$ the longer it will take to exit $I$.  Therefore, we may simply consider the case when $\s(s) \equiv M$.
The right hand side of 
\be \label{Mde}
-\frac{\partial}{\partial s} x_s =  \frac{1}{2}\frac{x_s^2-M  x_s +4}{M-x_s}
\ee
has a minimum when $x_s = M -2$, and so
\bes
-\frac{\partial}{\partial s} x_s \geq \frac{4-M}{2}. 
\ees
 Define
\be \label{del2}
\Delta = \frac {10\sqrt{\e} }{4-M}.
\ee
Since $I$ is an interval of length $5 \sqrt{\e},$ then $x_s$ must exit $I$ after decreasing for a time interval of length $\Delta$.

 \end{proof}

In our last lemma, we simply restate the results of Lemma \ref{interval} and Lemma \ref{tail} without reference to the time change.
This gives a quantitative version of the following fact:  if $M<4$ and $|\lambda(T)-\lambda(t)| \leq M \sqrt{T-t}$ for all $t < T$, then it is not possible for any real point to be captured at time $T$.

\begin{lemma}\label{tail2}
Let $0<\e<1/2$ and $0<M<4$. 
Suppose that $\l$ is a Lip$(1/2)$ driving function with
$ \norm{\l}_{1/2} \leq 4+2\e$.  
Further suppose that $x \in \R \setminus \{\l(0)\}$ is captured at time $T$, meaning $g_T(x)=\l(T)$.
Then there exists $S_0 < \infty $ and $\Delta < \infty $
(with $S_0$ and $\Delta$ depending only on $\epsilon$ and $M$), 
so that whenever $s \geq S_0$, the time interval 
$[(1-e^{-s})T, \, (1-e^{-(s+\Delta)})T]$ contains a time $t$ satisfying
$|\l(T)-\l(t)| \geq M \sqrt{T-t}.$
Further,  we may take $\Delta= 10\sqrt{\e}(4-M)^{-1}$.

\end{lemma}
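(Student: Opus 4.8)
The plan is to translate Lemmas \ref{interval} and \ref{tail} back through the time-change substitution $s = -\ln(1-t)$, $t = 1 - e^{-s}$, after first reducing the general capture time $T$ to the normalized time $1$ by scaling. So the first step is to apply the scaling property of the Loewner equation: given that $x$ is captured at time $T$ with $g_T(x) = \lambda(T)$, replace $\lambda(t)$ by the rescaled and translated driving function $\tilde\lambda(t) = \frac{1}{\sqrt T}\bigl(\lambda(Tt) - \lambda(T)\bigr)$ on $[0,1]$, which has the same Lip$(1/2)$ norm, satisfies $\tilde\lambda(1) = 0$, and for which the corresponding point $\tilde x = x/\sqrt T$ is captured at time $1$ with $g_1^{\tilde\lambda}(\tilde x) = 0$. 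One also needs $\tilde x < \tilde\lambda(0)$, i.e. $x < \lambda(0)$; by the reflection property we may assume this without loss of generality (if $x > \lambda(0)$, reflect in the imaginary axis). This puts us exactly in the hypotheses of Lemmas \ref{interval} and \ref{tail}.

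The second step is to observe that Lemma \ref{tail} already yields the conclusion in the $\sigma$-language: with $S_0$ as in Lemma \ref{interval} and $\Delta = 10\sqrt\e\,(4-M)^{-1}$, for any $s_1 \ge S_0$ the interval $[s_1, s_1 + \Delta]$ \emph{cannot} have $\sigma(u) < M$ throughout, so there exists some $u \in [s_1, s_1+\Delta]$ with $\sigma(u) \ge M$. Unwinding the definition $\sigma(u) = \tilde\lambda(1 - e^{-u})/\sqrt{1 - e^{-u}}$, and recalling $\tilde\lambda(1) = 0$, the inequality $\sigma(u) \ge M$ says precisely $|\tilde\lambda(1) - \tilde\lambda(t)| \ge M\sqrt{1-t}$ at the time $t = 1 - e^{-u}$. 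The third step is to map this back to the original $\lambda$ and $T$: under the scaling $t \mapsto Tt$, a time $t \in [1 - e^{-s_1}, 1 - e^{-(s_1+\Delta)}]$ for $\tilde\lambda$ corresponds to a time in $[(1-e^{-s_1})T, (1-e^{-(s_1+\Delta)})T]$ for $\lambda$, and the inequality $|\tilde\lambda(1) - \tilde\lambda(t)| \ge M\sqrt{1-t}$ becomes $|\lambda(T) - \lambda(t')| \ge M\sqrt{T - t'}$ after multiplying through by $\sqrt T$. Renaming $s_1$ as $s$ gives exactly the stated conclusion, with $S_0$ coming from Lemma \ref{interval} (which depends only on $\e$) and $\Delta = 10\sqrt\e\,(4-M)^{-1}$.

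I do not expect any serious obstacle here — this lemma is essentially a bookkeeping restatement. The one point requiring a little care is the \emph{direction} of the inequality in the definition of $\sigma$: we need $\sigma(u) \ge M$ to translate to a genuine lower bound on $|\lambda(T) - \lambda(t)|/\sqrt{T-t}$, and this works cleanly only because $\tilde\lambda(1) = 0$, so that $\sigma(u) = |\tilde\lambda(t)|/\sqrt{1-t} = |\tilde\lambda(1) - \tilde\lambda(t)|/\sqrt{1-t}$ with the sign conventions ($\tilde\lambda > 0$ on $[0,1)$) making the absolute value transparent. A secondary bookkeeping point is checking that the scaling and reflection reductions are compatible — that reflecting does not disturb the normalization $\lambda(1) = 0$ and $\lambda > 0$ on $[0,1)$ assumed in the earlier lemmas — but since reflection sends $\lambda$ to $-\lambda$ and we can further translate, this is immediate. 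Overall the proof is three short paragraphs: scale/reflect to normalize, quote Lemma \ref{tail}, unwind the time-change.
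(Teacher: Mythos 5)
Your proof is correct and is exactly the argument the paper intends: the paper offers no separate proof of this lemma, describing it as a restatement of Lemmas \ref{interval} and \ref{tail} without the time change, and your scaling/translation/reflection normalization followed by unwinding $\sigma(u)\ge M$ through $t=1-e^{-u}$ supplies precisely the omitted bookkeeping. The points you flag (sign conventions via $\tilde\lambda>0$ on $[0,1)$, compatibility of reflection with the normalization) are the right ones and are handled correctly.
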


Note that  if  $x > \l(0)$, then we can conclude that  $\l(T)-\l(t) \geq M \sqrt{T-t}.$

\subsection{Proof of Theorem \ref{2ndphase}}

\begin{proof}[Proof of Theorem \ref{2ndphase}]

Assume that $\l$ is a Lip$(1/2)$ driving function that generates a curve $\gamma$ with non-empty interior.
We would like to show that $\norm{\l}_{1/2} > 4.0001$. To this end, we assume that
$\norm{\l}_{1/2} \leq 4+2\e$ and strive for a contradiction when $\epsilon$ is sufficiently small.

There must be some finite time $T$ so that $\gamma[0,T]$  has non-empty interior: 
If not, then there is some closed disk $D$ in $\mathbb{C}$ that is the countable union of the nowhere-dense sets $D \cap \gamma[0,n]$,
contradicting the\ Baire Category Theorem.
If $\gamma(t_0)$ is an interior point, then $\l(t_0)$ is an interior point of $g_{t_0}(\gamma)$ (with respect to $\overline\H$).
Replacing $\l(t)$ with $\l(t+t_0)-\l(t_0)$ and scaling appropriately,
we may therefore assume that there is an interval $I\subset K_1\cap\R_+$.

Each point $x\in I$ will be captured at a distinct ``capture time''.
Since there are uncountably many points in $I$, there exist capture times $T_1< T_2$ 
so that $T_2 - T_1 \leq e^{-2 S_0}T_2$, where $S_0$ is given as in Lemma \ref{tail2}. (Note that $S_0$ depends on $\e$ and $M \in (0,4)$,  and these will be specified later.)

\begin{figure}[h]
\centering
\includegraphics[scale=0.8]{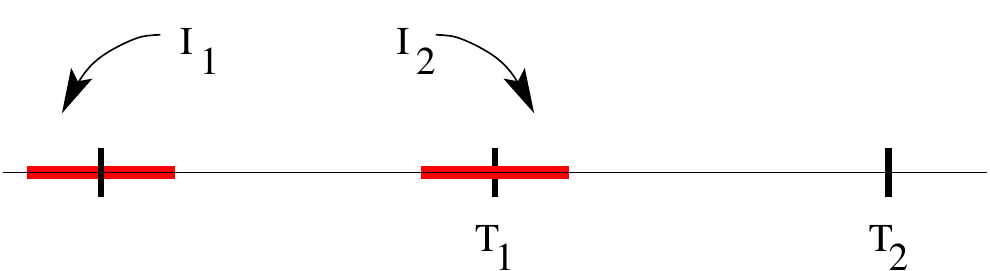}
\caption{The times $T_1$ and $T_2$ and their corresponding intervals $I_1$ and $I_2$.}\label{Tintervals}
\end{figure}

 Let $\Delta= 10\sqrt{\e}\,(4-M)^{-1}$  be  as in Lemma \ref{tail2},
and consider the time interval $I_2 =[(1-e^{-s})T_2, \, (1-e^{-(s+\Delta)})T_2]$, 
 where  $s$ is chosen so that  $T_1$ is the midpoint of this interval.
 Let $I_1$ be an interval of the same length as $I_2$, but shifted to the left by $T_2 -T_1$.  In other words, the distance from $T_1$ to the midpoint of $I_1$ is the same as the distance from  $T_2$ to $T_1$, the midpoint of $I_2$, as shown in Figure \ref{Tintervals}.
 Then by Lemma \ref{tail2}, there exists  $t_2  \in I_2$  and $t_1 \in I_1$ so that 
 \be \label{Testimates}
 \l(T_2)-\l(t_2) \geq M \sqrt{T_2-t_2} \;\;\; \text{ and } \;\;\; \l(T_1)-\l(t_1) \geq M \sqrt{T_1-t_1}.
 \ee
  
We would like to conclude that, for appropriate choices of $M$ and $\epsilon,$
$$\l(T_2)-\l(t_1)>(4+2\e)\sqrt{T_2-t_1}$$
which would yield the desired contradiction to our standing assumption $\norm{\l}_{1/2} \leq 4+2\e$.

\noindent
Since
\begin{align*}
\l(T_2)-\l(t_1) &= \left(  \l(T_2)-\l(t_2) \right) + \left( \l(t_2)-\l(T_1) \right) + \left( \l(T_1) - \l(t_1) \right)\\
&\geq M\sqrt{T_2-t_2} - (4+2 \e) \sqrt{|t_2-T_1|} + M \sqrt{T_1-t_1},
\end{align*}
it suffices to show
\be \label{inequality}
(4+2 \e) \sqrt{T_2-t_1} - M\sqrt{T_2-t_2} + (4+2 \e) \sqrt{|t_2-T_1|} - M \sqrt{T_1-t_1} < 0.
\ee
Set $M=3.5$ and $\e = 0.00005$. 
Then the left hand side of \eqref{inequality} is increasing in $t_1$.
Therefore, we can assume that $t_1$ is the right endpoint of $I_1$.
Then 
\begin{align*}
&T_2 - t_1 =  2e^{-(s+\Delta)}T_2+(1/2) (e^{-s}-e^{-(s+\Delta)})T_2,\\
&T_i - t_i \geq e^{-(s+\Delta)}T_2\, \text{ for  }i=1,2,  \,\, \text{ and  }\  \\
&|t_2-T_1| \leq (1/2) (e^{-s}-e^{-(s+\Delta)})T_2.
\end{align*}
Now computation shows that the left hand side of  \eqref{inequality} is less than $ -0.125 \sqrt{e^{-s} T_2}$
and the theorem is proved.

\end{proof}

\section{A criterion for Lip(1/2) driving terms}\label{s:domains}

In this section we prove Theorem \ref{qdisc} and then discuss applications to several examples.

\subsection{Proof of Theorem \ref{qdisc}}

In order to prove Theorem \ref{qdisc}, we need to make a connection between the geometry of a  family of hulls 
and the  Lip$(1/2)$ norm of the driving term.  
The following simple lemma provides this connection.

\begin{lemma}\label{cap}
Let $K_T$ be the hull generated by the driving term $\lambda$ at time 
$T$.  Then 
$$\max \{ \operatorname{Im}(z) \text{ }| \text{ } z \in K_T \}\leq 2\sqrt{T},$$
and 
$$\abs{\lambda(T)-\lambda(0)} \leq 4 \operatorname{diam}(K_T).$$

\end{lemma}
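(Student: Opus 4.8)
The plan is to prove both bounds directly from the hydrodynamic expansion and the Loewner ODE. For the first inequality, recall that for each fixed $z = x+iy \in \H$ with $z \notin K_T$, the imaginary part of $g_t(z)$ satisfies
\[
\frac{\partial}{\partial t}\operatorname{Im} g_t(z) = \operatorname{Im}\frac{2}{g_t(z)-\lambda(t)} = \frac{-2\,\operatorname{Im} g_t(z)}{|g_t(z)-\lambda(t)|^2},
\]
so $\operatorname{Im} g_t(z)$ is decreasing, but more usefully $y^2 = (\operatorname{Im} z)^2 \le (\operatorname{Im} g_t(z))^2 + 4t$ — this follows because $\frac{d}{dt}(\operatorname{Im} g_t(z))^2 = -4(\operatorname{Im} g_t(z))^2/|g_t(z)-\lambda(t)|^2 \ge -4$. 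Hence for a point $z$ that is captured exactly at time $T_z \le T$, letting $t \uparrow T_z$ forces $\operatorname{Im} g_t(z) \to 0$, giving $(\operatorname{Im} z)^2 \le 4 T_z \le 4T$, i.e. $\operatorname{Im} z \le 2\sqrt{T}$. For a general point $z \in K_T$ one approximates by points captured by time $T$ (points in $K_T$ are limits of such, or one argues with the closure directly), so $\max\{\operatorname{Im} z : z \in K_T\} \le 2\sqrt{T}$.

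For the second inequality, the plan is to extract $\lambda(T)$ from the geometry of $K_T$ via the normalization of $g_T$. One clean route: the half-plane capacity satisfies $\operatorname{hcap}(K_T) = 2T$, and $\lambda(T)$ can be recovered as a "harmonic-measure weighted centroid" of $K_T$ — concretely, if $h_T$ is the density of harmonic measure from $\infty$ (equivalently, $\lambda(T) = \int \operatorname{Re}(w)\, d\mu_{K_T}(w)$ where $\mu$ is the appropriate probability measure on $K_T$ associated to the Loewner flow at time $T$; this is standard, e.g. from differentiating the expansion $g_T(z) = z + 2T/z + \dots$ together with the fact that $\lambda(T)$ is the point to which the tip maps). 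From such a representation, $\lambda(T)$ lies in the convex hull of $\{\operatorname{Re} w : w \in K_T\}$ translated appropriately, and since $K_T$ is connected and contains $\lambda(0)$ in its closure (the initial point), one gets $|\lambda(T) - \lambda(0)| \le \operatorname{diam}(K_T \cup \{\lambda(0)\}) \le \operatorname{diam}(K_T)$ up to a universal constant — and chasing the constant gives $4$. Alternatively, and perhaps more elementary, one bounds $|\lambda(T) - \lambda(0)|$ by comparing $g_T$ to the identity: $|g_T(z) - z| \le C\,\operatorname{diam}(K_T)$ on, say, the boundary of a large disc containing $K_T$, via the Schwarz-type estimate for hydrodynamically normalized maps, and then reads off $\lambda(T)$ as $\lim_{z \to \text{tip}} g_T(z)$, while $\lambda(0)$ is the base point; controlling the difference by the size of the hull.

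I expect the \textbf{main obstacle} to be pinning down the constant $4$ in the second bound rather than some weaker universal constant: the imaginary-part estimate is robust and essentially forced, but getting $|\lambda(T)-\lambda(0)| \le 4\operatorname{diam}(K_T)$ with exactly the constant $4$ requires being careful about which representation of $\lambda(T)$ one uses and how $\operatorname{diam}(K_T)$ enters. The cleanest fix is probably to use the centroid/harmonic-measure representation of $\lambda(T)$ together with the observation that $\operatorname{Re} w$ ranges over an interval of length at most $\operatorname{diam}(K_T)$ and $\lambda(0)\in\overline{K_T}$, combined with the first part of the lemma to control the imaginary direction — wrapping these together should yield the factor $4$ (or even less), but the bookkeeping is where care is needed. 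Since the lemma only claims $\le 4\operatorname{diam}$, any clean argument that lands at or below $4$ suffices, so I would aim for the simplest such estimate rather than the sharpest.
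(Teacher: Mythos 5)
Your argument for the first (height) bound is correct and is essentially the paper's argument run forwards instead of backwards: the paper writes the backward flow as $f_t=x_t+iy_t$ and derives $\partial_t y_t^2\leq 4$, which is the same differential inequality you obtain for $(\Im g_t(z))^2$ with the opposite sign. No issue there.

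The second inequality is where there is a genuine gap. Your primary route rests on representing $\lambda(T)$ as a harmonic-measure weighted centroid of $K_T$, but no such representation exists: differentiating the hydrodynamic expansion recovers $\hcap(K_T)=2T$ and, at the next order, the time average $\int_0^T\lambda(s)\,ds$ --- not the terminal value $\lambda(T)$. The conclusion you draw from it, that $\lambda(T)$ lies in (roughly) the real projection of $K_T$, is false in general: for $\lambda(t)=4\sqrt{1-t}$ the hull at time $1$ is a curve from $4$ to $2$ sitting well to the right of $\lambda(1)=0$. Your alternative route, bounding $|g_T(z)-z|$ by a multiple of $\diam(K_T)$ and evaluating at the tip, has the right flavor, but the distortion estimate is left as a black box and, as you acknowledge, extracting the constant is exactly the crux; moreover evaluating $g_T$ at the tip requires knowing $\lambda(T)\in g_T(\partial K_T)$ in the boundary sense anyway.

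What your proposal is missing is the paper's actual mechanism. Set $A=g_T(\partial K_T)$, an interval in $\R$. One must show that \emph{both} endpoints of interest lie in $A$: $\lambda(T)\in A$ is immediate (it is the image of the tip), but $\lambda(0)\in A$ requires a genuine argument --- the paper runs the backward equation and traps $\xi(T)=\lambda(0)$ between the trajectories started from points just outside $A$, which move monotonically toward each other and can never collide with $\xi$. Then
$|\lambda(T)-\lambda(0)|\leq\length(A)=4\operatorname{cap}(A)=4\operatorname{cap}(K_T^*)\leq 4\,\diam(K_T)$,
using conformal invariance of logarithmic capacity under the hydrodynamically normalized map $g_T$, the identity that an interval's capacity is one quarter of its length, and the containment of $K_T^*$ (the hull together with its reflection) in a disc of radius $\diam(K_T)$ centered at a real point of $\overline{K_T}$. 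The factor $4$ comes precisely from the interval-capacity identity; without the capacity comparison (or an equivalent quantitative substitute) the claimed constant cannot be reached.
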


\begin{proof}

Let $f_t$ be the family of conformal maps 
generated by $\xi_t=\lambda(T-t)$ via the backward Loewner equation
\eqref{ble}, so that 
$f_{T}$ is the conformal map from $\mathbb{H}$ onto $\mathbb{H} \setminus K_T$.

\bigskip\noindent

The first claim just says that $K_t$ cannot grow vertically any faster 
than the vertical slit generated by the constant function.  To see this,
write $f_t=x_t+i y_t$ and notice that $\partial_t y_t = 2y_t/((x_t-\xi_t)^2+y_t^2)\leq 2/y_t,$
or $\partial y_t^2\leq 4.$ Thus $y_t\leq \sqrt{y_0^2+4t}$ and the claim follows by letting $y_0\to0.$

\bigskip\noindent

For the second claim, let $A=g_T(\partial K_T)$, which means that $A$ is an interval in $\mathbb{R}$ and $f_T(A)$ is the boundary of the hull $K_T$ in the upper halfplane. 
Once we know that $\lambda(0)$ and $\lambda(T)$ are contained in A,
the claim is established by the following facts about logarithmic capacity: 
$\operatorname{diam}(A) = 4\operatorname{cap}(A)$  and  
$\operatorname{cap}(A)=\operatorname{cap}(K_T^*) \leq \operatorname{diam}(K_T),$ where $K_T^*$ is the union of $K_T$ and its reflection across $\R$.

Notice that $\lambda(T) = \xi(0) \in A$, since $f_T(\xi(0))$ is the ``tip'' of $K_T$. 
It remains to show that $\lambda(0) = \xi(T)$ is also in $A$.  
For $\epsilon >0$, set $a_1 = \min(A) - \epsilon$, set $a_2 = \max(A)+ \epsilon$, and let $x_i(t)$ be the solution to \eqref{ble} with initial value $a_i$ for $i=1,2$.  Note that $\partial_t \,x_1(t) >0$ and $\partial_t \, x_2(t) < 0$.  Further, 
$x_i(t) \neq \xi(t)$  since $a_i \notin A$.
Thus, $a_1 < x_1(t) < \xi(t) < x_2(t) < a_2$ for all $t \in [0,T]$.  Letting $\epsilon \rightarrow 0$, this
implies that   $\xi(T) \in A$.  

\end{proof}

\begin{figure}
\centering
\includegraphics[scale=0.8]{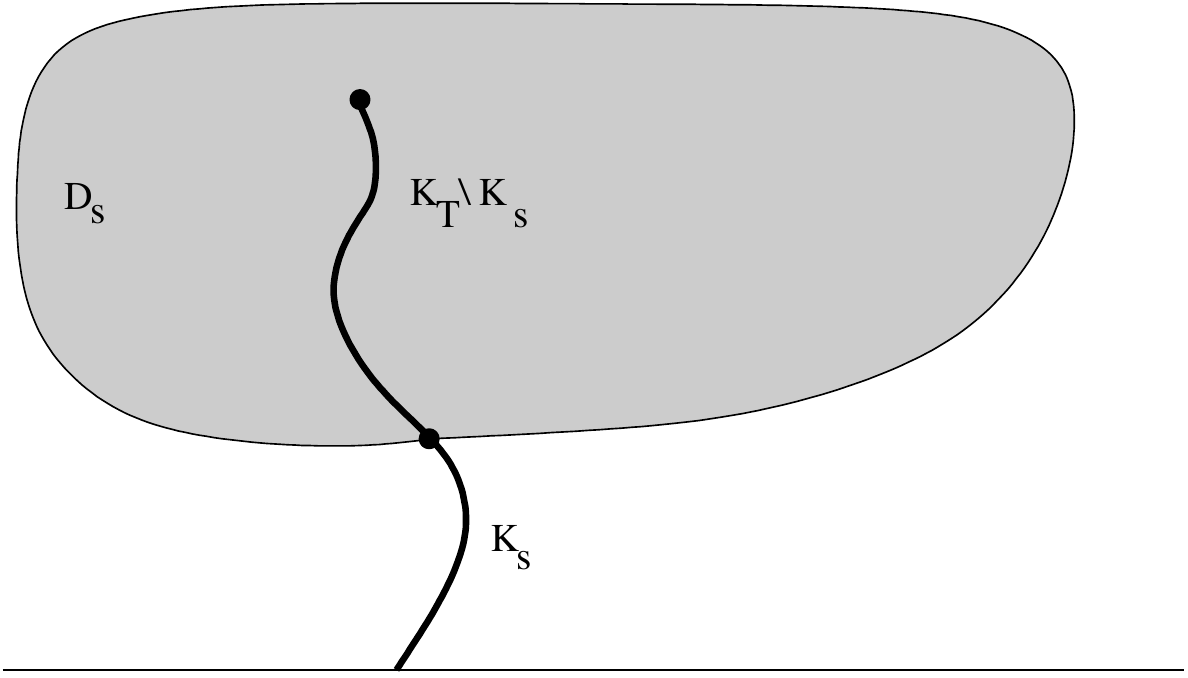}
\caption{The quasidisc $D_s$ of Theorem \ref{qdisc}.}\label{quasidisc}
\end{figure}

We are now ready to prove Theorem \ref{qdisc}.

\begin{proof}[Proof of Theorem \ref{qdisc}]

Let $s,t \in [0,T]$ with $s < t$, and 
set $\hat{K}_{s,t} := g_s \left(  K_t \setminus K_s \right)$. Lemma \ref{cap} implies 
$$\abs{\lambda(t)- \lambda(s)} \leq 4 \text{ diam}(\hat{K}_{s,t}).$$
  We claim that  
\begin{equation}\label{diamheight}
\operatorname{diam}(\hat{K}_{s,t}) \leq C \max \{ \operatorname{Im}(z) \text{ }|\text{ } z \in \hat{K}_{s,t} \}.
\end{equation}
Along with Lemma 
\ref{cap} this gives 
$$\abs{\lambda(t)- \lambda(s)} \leq  4C \max \{ \operatorname{Im}(z) \text{ }|\text{ } z \in \hat{K}_{s,t} \} 
\leq 8C\,\sqrt{t-s}.$$
It therefore remains to prove \eqref{diamheight}.

By assumption, there is a $k$-quasi-disc $D_s \subset \mathbb{H}$, 
with $K_s$ in the complement of $D_s$.  
Therefore, $g_s$ is conformal on $D_s$.
Thus there is a quasi-conformal map from $\hat{\mathbb{C}}$ to 
itself that agrees with $g_s$ on $D_s$ (see Section I.6 of \cite{Le} for 
one possible reference), and the quasi-conformal constant of this map depends only on 
$k$.  Hence $g_s$ is quasi-symmetric on 
$\overline{D_s}$, 
with constant depending only on $k$.  Recall that a homeomorphism $g$ is 
quasi-symmetric if $\abs{z-z_0} \leq a \abs{w-z_0}$ implies that  
$\abs{g(z)-g(z_0)} \leq c(a) \abs{g(w)-g(z_0)}.$

Let $z_0$  be a point in $K_t \setminus K_s$ that maximizes dist$(z, 
\partial D_s)$.  Let $z$ 
be in $K_t \setminus K_s$, and let $w$ be in $\partial D_s$.
Using property (3), we have that   
$$\abs{z-z_0} \leq \text{diam}\left( K_t 
\setminus K_s \right) \leq C_0 \text{ dist}(z_0, \partial D_s) \leq
C_0\, \abs{w- z_0}.$$ 
By quasi-symmetry, 
$$\abs{g_s(z)- g_s(z_0)} \leq C \,\abs{g_s(w)- g_s(z_0)},$$ 
where the 
constant $C$ depends only on $C_0$ 
and $k$. Maximizing over $z$ and minimizing over $w$ establishes 
\eqref{diamheight}, completing the proof.
\end{proof}

\subsection{Examples}\label{s:expls}

We illustrate the use of Theorem \ref{qdisc} by applying it to each of the curves mentioned in Corollary \ref{expls}.
To show that each of these examples are generated by a Lip$(1/2)$ driving term,  we must construct the family of $k$-quasi-discs $D_s$ required in the hypotheses of Theorem \ref{qdisc}.

\bigskip

\noindent
{\bf The van Koch curve.}

\bigskip
\noindent
It was already shown in \cite{MR} that this curve, as well as any other quasi-slit, is driven by a Lip(1/2) function.
To obtain a proof based on Theorem \ref{qdisc}, let $F$ be a quasi-conformal automorphism of $\H$
fixing $\infty$, and for $0<\tau<1$ let $\Delta_{\tau}$ be the triangle with vertices $-\tau,\tau, i\tau.$
Then the domain $D_{\tau}=F(\H\setminus \Delta_{\tau})$ is a quasi-disc for each $\tau$, and it easily
follows from the quasi-symmetry of $F$ that the curve $F[0,i]$ satisfies the assumptions of  Theorem \ref{qdisc}.

\bigskip

\noindent
{\bf The Hilbert space filling curve.} 

\bigskip

\begin{figure}
\centering
\includegraphics[scale=0.35]{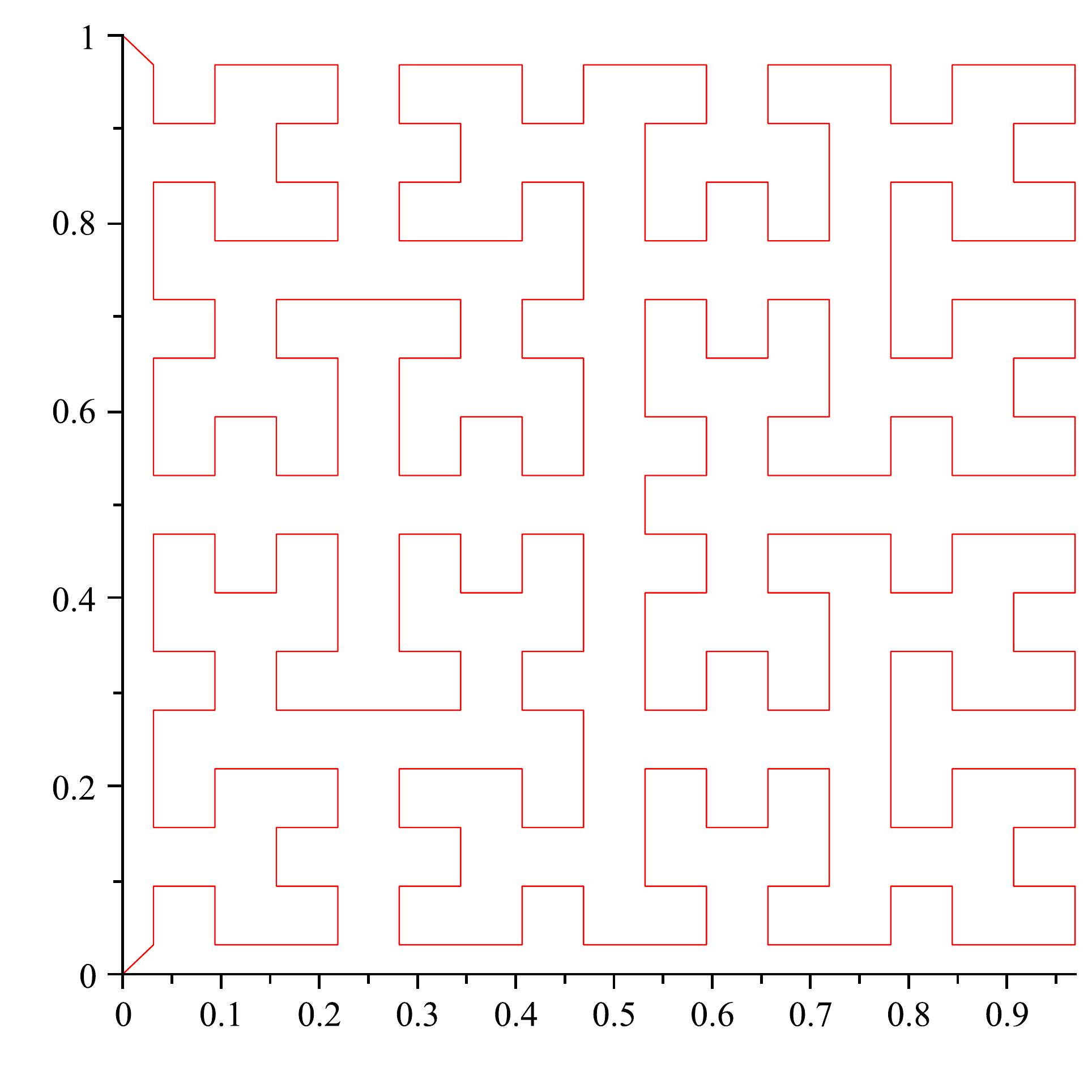}
\includegraphics[scale=0.35]{space6}
\caption{Two curves approximating the space-filling Hilbert curve. 
}\label{space}
\end{figure}

\noindent
In this case, setting $D_s := \mathbb{H} \setminus \Gamma[0,s]$ will do.  
To see this, it is sufficient to show that both $D_s$ and its complement are
John domains (see \cite{P}, Chapter 5). It suffices to show that the interior 
of $\Gamma[0,s]$ is John, since $D_s$ and its complement have the 
same geometry (because $\Gamma[0,s]\cup \Gamma[s,1]=[0,1] \times [0,1]$).

To see that the interior $G$ of $\Gamma[0,s]$ is John, we would like to show that there exists $C>0$ so that for every
rectilinear crosscut $[a,b]$ of $G$, the diameter of one of the two components of
$G\setminus[a,b]$ is bounded above by $C|b-a|.$ 
Let $[a,b]$ be a rectilinear crosscut of $G$, 
and  let   $A$ denote the component of $G\setminus[a,b]$ with smaller diameter.
It is possible to do a case study in which we consider all the possible configurations of $A$ and $G\setminus A$.  However, it will be much simpler to consider only ``small" crosscuts, and so we assume, as we may, that $|b-a| < (1/100) \text{ diam}(G)$.
With this assumption, there are just two cases, which are roughly described as (1)  $A$ is in a ``corner" of $\Gamma[0,s]$, or (2) $A$ is contained in the ``end" of $\Gamma[0,s]$.

Case (1): $A$ is contained in a right triangle with hypothenuse $[a,b]$.  In this case it is clear that  $ \text{diam}(A) = |b-a|$.

Case (2):  Suppose $A$ is not contained in a right triangle with hypothenuse $[a,b]$.  Then $A$ must be near the ``end" of $\Gamma[0,s]$.  
We will describe this carefully, but first
let us pause for a minute and remember how the Hilbert space-filling curve grows: for any integer $k$, we can decompose the unit square into $2^{2k}$ squares with disjoint interior and sidelength $2^{-k}$.  
 The Hilbert space-filling curve completely fills out each square before venturing into the interior of another square, and it never returns to the interior of a square after leaving it.  
Now let $m$ be the unique integer so that $2^{-(m+1)} \leq |b-a| < 2^{-m}$.  
Then there are a finite number of squares  $S_1, S_2, \cdots, S_N$ 
with disjoint interior and sidelength equal to $2^{-m}$ 
so that  $\Gamma[0,s] = \left( \cup_{k=1}^{N-1} S_k \right) \cup \left(\Gamma[0,s] \cap S_N\right)$.  
(That is, $\Gamma[0,s]$ fills out the first $N-1$ squares, but might not completely fill out the last square $S_N$.)
Further, assume that the squares are listed  in the order in which they are visited by the Hilbert space-filling curve.  
 Because of the choice of $m$, the sidelength of the squares $S_k$ is greater than $|b-a|$.  If we are not in case (1),  it follows that  $A$ must be contained in $S_{N-1} \cup S_N$, which is a $2^{-m}$ by $2 \cdot 2^{-m}$ rectangle. 
 Hence, diam$(A) \leq 2^{-m}\sqrt{5} \leq 2\sqrt{5}\,  |b-a|$.

\bigskip

\noindent
{\bf The half-Sierpinski triangle.} 

\begin{figure}
\centering
\includegraphics[scale=0.35]{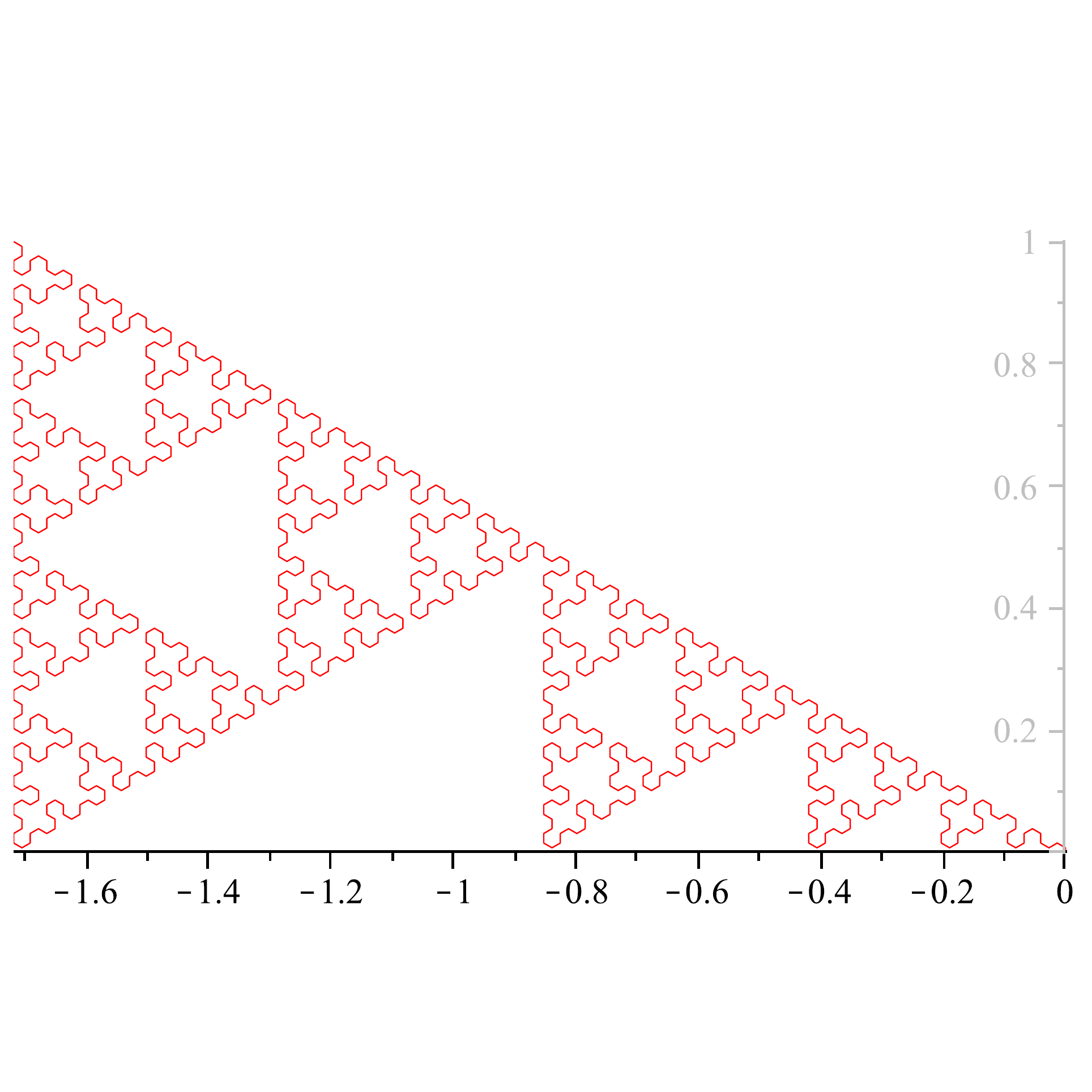}
\includegraphics[scale=0.35]{hs8}
\caption{Two curves approximating a curve that traces out half of a Sierpinski triangle. 
}\label{halfsierp}
\end{figure}

\bigskip
\noindent
Here, setting $D_s = \mathbb{H} \setminus \Gamma[0,s]$ will almost work but needs minor modifications. 
First, we have to take the unbounded component of $\mathbb{H} \setminus \Gamma[0,s]$ (that is, 
we just fill in the holes (white triangles) of $\mathbb{H} \setminus \Gamma[0,s]$). 
And second, we add all those white triangles (to the complement of $D_s$)
for which one of its edges is contained in $\Gamma[0,s]$.
For instance, consider the point $p = -\sqrt{3}/2 + i/2$ and the time $s_p$
for which $\Gamma(s_p)=p.$ For each $s>s_p,$ the point $p$ is a cut-point of $\Gamma[0,s]$. For all those
$s$, the largest white triangle belongs to the complement of $D_s$. 
As in the previous example, 
it is possible to show
that $D_s$ satisfies the requirements of Theorem \ref{qdisc}.

\bigskip

\noindent
{\bf A curve of positive area.}

\bigskip
\noindent
Our final example is a standard construction of a curve with positive area.
It is interesting because a well-known construction based on the Beltrami equation
shows that every set of
positive area admits non-trivial homeomorphisms that are conformal off that set.
In particular, we obtain that the conformal welding homeomorphism 
of a Lip(1/2) driven curve does not neccessarily determine the curve uniquely.  
See \cite{AJKS} for a discussion of this in the context of random weldings.

\bigskip

The curve $\beta:[0,1]\to[0,1]\times [0,1]$  will be constructed in stages.  
We begin by defining the pieces of the curve which lies in $[0,1] \times [0,1] \setminus S_1$, 
where $S_1$ is the disjoint union of 4 closed squares 
which together have total area $1-\epsilon_1$, for $\epsilon_1 \in [0,1)$.  
The definition of $\beta$ in $[0,1] \times [0,1] \setminus S_1$ is shown in Figure \ref{bblock}.

\begin{figure}[h]
\centering
\includegraphics[scale=0.5]{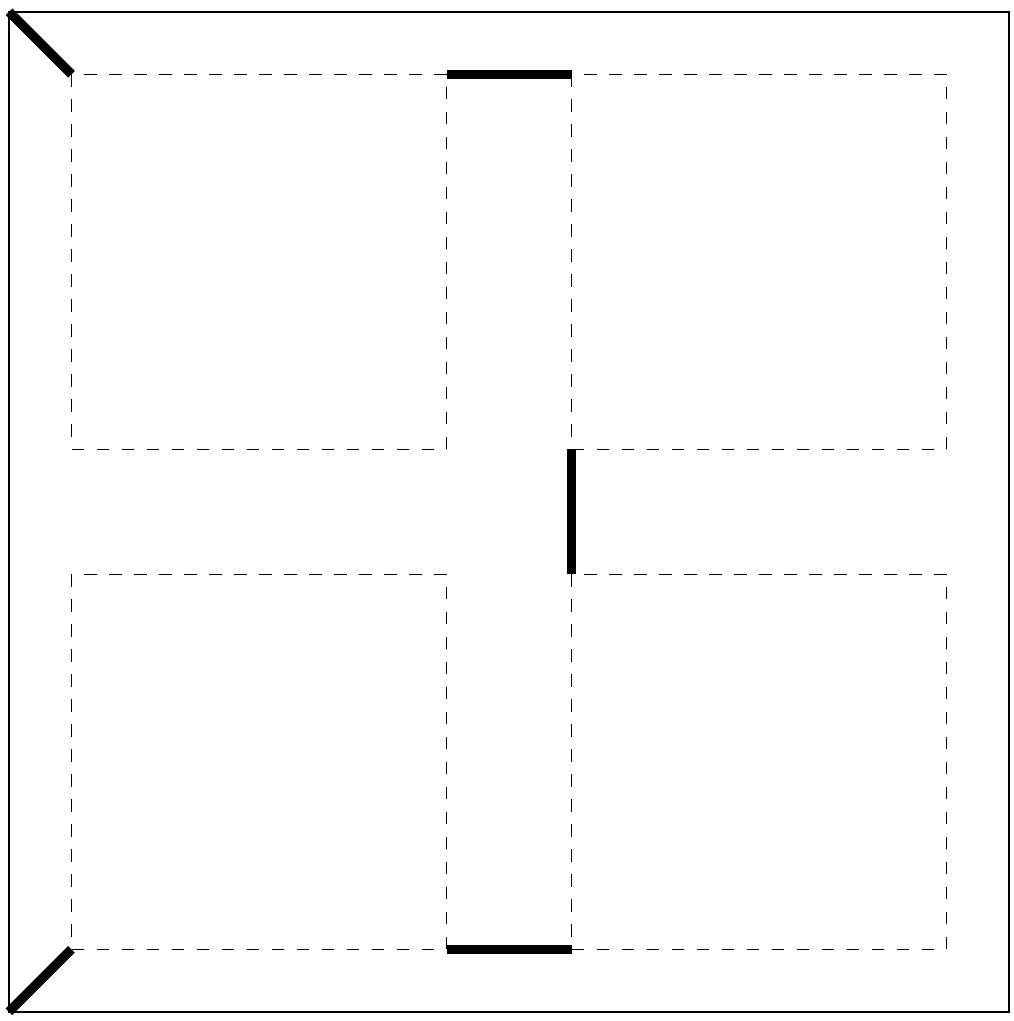}
\hspace{.5in}
\includegraphics[scale=0.5]{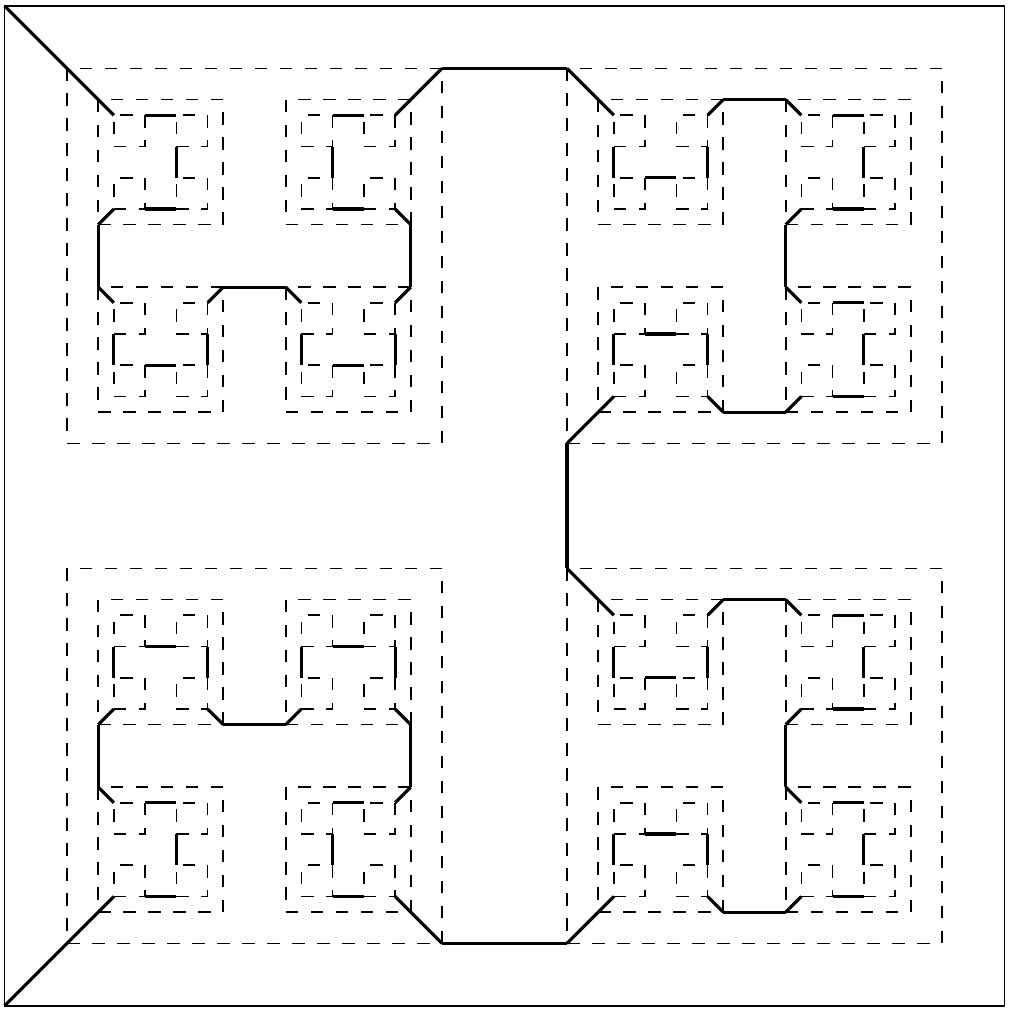}
\caption{The pieces of the curve $\beta$ which lie in $[0,1] \times [0,1] \setminus S_1$ respectively $[0,1] \times [0,1] \setminus S_3$ are shown as thick solid lines. On the left, $S_1$ is the set bordered by dashed lines.}\label{bblock}
\end{figure}

Fix numbers $0\leq \epsilon_k<1.$
We take $S_k$ to be the disjoint union of $4^k$ equally-sized, closed squares which are contained in $S_{k-1}$ and which have area$(S_k)= (1-\epsilon_k) \text{ area}(S_{k-1})$.  Roughly, we define $\beta$ in $S_{k-1} \setminus S_{k}$ by fitting the left image of Figure \ref{bblock} in each of the $4^{k-1}$ squares in $S_{k-1}$, after scaling and rotating the picture appropriately, taking care to match corners so that our final curve will be continuous.  We show $\beta \setminus S_3 $ in Figure \ref{bblock}.  The limiting object is the curve $\beta$. Notice that the Hilbert curve is the special case 
$\epsilon_k=0$ for all $k,$ whereas $\epsilon_k\geq\epsilon>0$ generates a quasislit.

To create a curve with positive area, choose $\epsilon_k \in (0,1)$ so that $\sum_{k=1}^{\infty} \epsilon_k < \infty$.  Note that  each point in $\bigcap_{k=1}^{\infty} S_k$ must lie on the curve,
and
$$\text{area} \big( \bigcap_{k=1}^{\infty} S_k \big) = \prod_{k=1}^{\infty} (1-\epsilon_k) >0.$$

The decreasing sequence of domains $D_s=\H\setminus A_s$ needed for Theorem \ref{qdisc} 
can be constructed by combining the ideas behind the constructions in the previous examples. 
A rough description is as follows. 
Denote $S_{11}=[a,b]\times[a,b]$ the first (lower left) square of $S_1,$ 
and $S_{12}=[c,d]\times[a,b]$ the second (lower right).  Squares $S_{13}$ and $S_{14}$ are the top right and top left squares of $S_1$.
Set $\beta(s)=x_s + i\, y_s$, and notice  that $x_0 = 0 = y_0$ and  $x_s = y_s$ for  small $s$.
If $\beta[0,s]\cap S_{11}=\emptyset$, 
     take $A_s$ to be the square $[0, x_s] \times [0, y_s]$.
If $\beta[0, s]\cap S_{11}=\beta[0,1]\cap S_{11}$ and $\beta[0,s]\cap S_{12}=\emptyset$, 
     set $A_s=[0, x_s]\times[0, b]$. 
If $\beta[0,s]\cap S_{12}=\beta[0,1]\cap S_{12}$ and $\beta[0,s]\cap S_{13}=\emptyset$, 
     set $A_s=[0,1]\times[0, y_s].$ 
Proceed similarly for $S_{13}$ and $S_{14}$,
and use the self-similar nature to extend the definition to higher levels. 
Then $D_s$ are quasi-discs as can be seen
by arguments similar to those for the Hilbert curve. 
We leave the details to the reader.

\subsection{The Sierpinski Arrowhead Curve}

In Figure  \ref{sierp}, we give pictures of curves approximating the Sierpinski arrowhead curve, which traces out a full Sierpinski triangle.  Note that our proof of Theorem \ref{2ndphase} applies to this example (although not to our half-Sierpinski example) since there is an interval along the real line for which each point is killed at a distinct time.

\begin{figure}[h]
\centering
\includegraphics[scale=0.4]{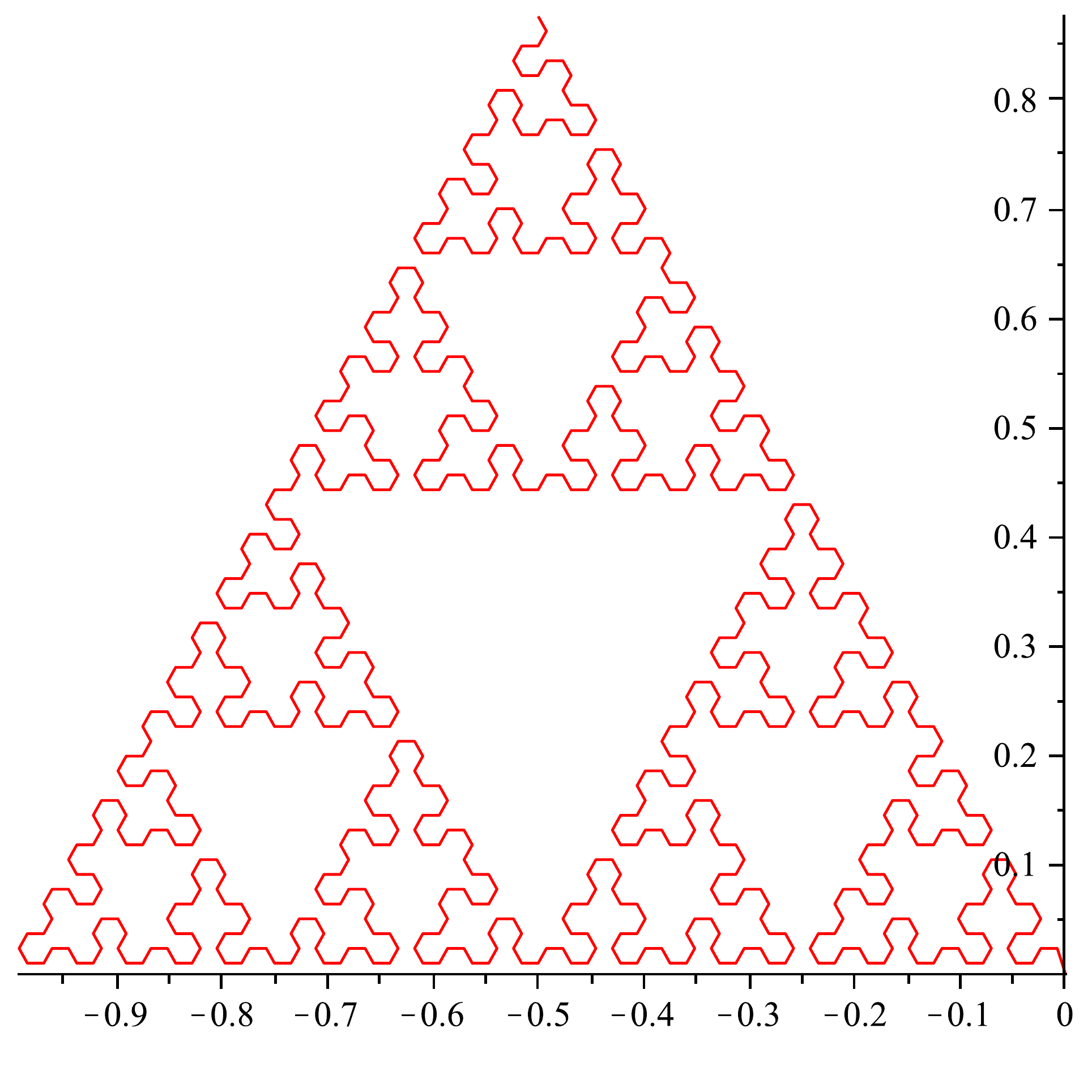}
\includegraphics[scale=0.4]{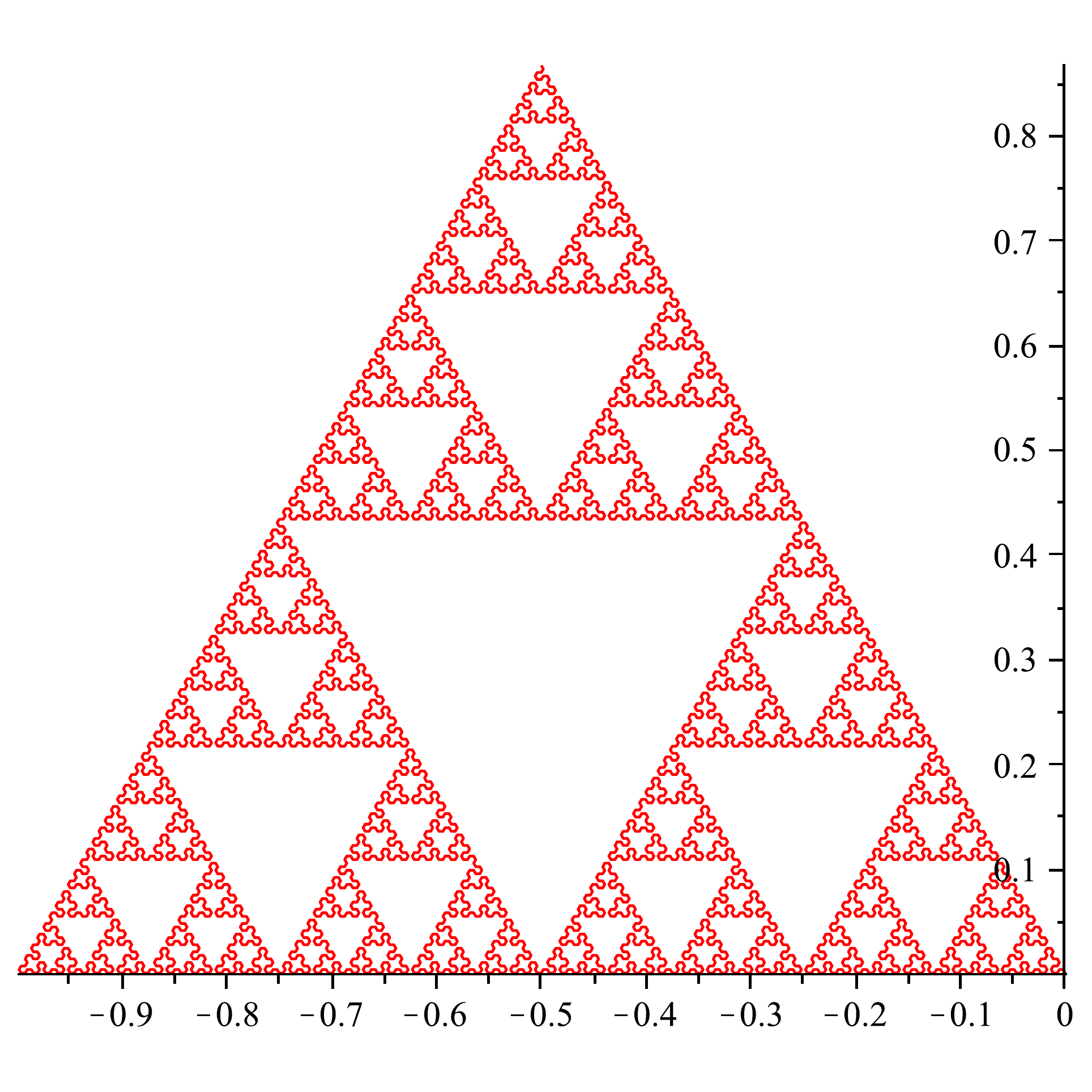}
\caption{Two curves approximating the Sierpinski arrowhead curve. 
}\label{sierp}
\end{figure}

\end{doublespace}

\end{document}